\theoremstyle{definition}
\newtheorem{Def}{Definition}[section]
\newtheorem{Thm}[Def]{Theorem}
\newtheorem{Cor}[Def]{Corollary}
\newtheorem{Prop}[Def]{Proposition}
\newtheorem{Lem}[Def]{Lemma}
\newtheorem{Cl}{Claim}
\newtheorem*{Cl*}{Claim}
\newtheorem{Ex}{Example}
\newtheorem*{Rem}{Remark}
\newcommand{\Hom}{{{\rm{Hom}}}}
\newcommand{\Int}{{{\rm{Int}}}}
\newcommand{\conv}{{{\rm{conv}}}}
\newcommand{\HH}{{{\rm{H}}}^+}
\newcommand{\OO}{{{\rm{O}}}}
\newcommand{\PP}{{{\rm{P}}}}
\newcommand{\QQ}{{{\rm{Q}}}}
\newcommand{\cc}{{{\rm{c}}}}
\newcommand{\QQQ}{{\mathbb{Q}}}
\newcommand{\RRR}{{\mathbb{R}}}
\newcommand{\ZZZ}{{\mathbb{Z}}}
\newcommand{\aaa}{{\mathfrak{a}}}
\newcommand{\mmm}{{\mathfrak{m}}}
\title{Formulas of F-thresholds and F-jumping coefficients on toric rings
\footnote{2000 \textit{Mathematics Subject Classification}. Primary
13A35; Secondary 14M25.}}
\date{}
\author{Daisuke Hirose}
\begin{document}
\maketitle
\begin{abstract}
Musta\c{t}\v{a}, Takagi and Watanabe
define F-thresholds,
which are invariants of a pair of ideals in a ring of characteristic $p > 0$.
In their paper,
it is proved that F-thresholds are equal to 
jumping numbers of test ideals on
 regular local rings.
In this note, we give formulas of F-thresholds and F-jumping
 coefficients on toric rings.
By these formulas,
we prove that there exists an inequality between F-jumping coefficients
and F-thresholds.
In particular, we observe a comparison between F-pure thresholds 
and F-thresholds in some cases.
As applications,
we give a characterization of regularity for toric rings defined
 by simplicial cones,
and we prove the rationality of F-thresholds in some cases.
\end{abstract}
\section{Introduction}
Let $R$ be a commutative Noetherian ring of characteristic $p>0$.
In \cite{HY}, Hara and Yoshida defined a generalized test ideal
$\tau(\aaa^c)$ of an ideal $\aaa \subseteq R$ and 
a positive real number $c \in \RRR_{>0}$.
This is a generalization of the test ideal $\tau(R)$,
which appeared in the theory of tight closure 
(cf.\ \cite{HH}).
On the other hand,
this ideal is a characteristic $p$ analogue of a multiplier ideal
(cf.\ \cite{Laz}).
Similarly, one can define a characteristic $p$ analogue of
a jumping coefficient of a multiplier ideal,
which is called the F-jumping coefficient.
In other words, $c \in \RRR_{>0}$ is an F-jumping coefficient of 
an ideal $\aaa \subseteq R$ 
if $\tau(\aaa^c) \neq \tau(\aaa^{c-\varepsilon})$ 
for all $\varepsilon > 0$.

Musta\c{t}\v{a}, Takagi and Watanabe studied 
an F-jumping coefficient.
In \cite{MTW}, they defined another invariant of singularities,
which is called the F-threshold.
They proved that
an F-threshold coincides with an F-jumping coefficient
on a regular local ring of characteristic $p>0$.
Using this relation,
they proved basic properties of F-jumping coefficients.
Blickle, Musta\c{t}\v{a} and Smith studied F-jumping coefficients
or F-thresholds on F-finite regular rings.
In particular, they proved the rationality and discreteness
of \mbox{F-thresholds} for F-finite regular rings under some assumptions
(cf.\ \cite{BMS1} and \cite{BMS2} for details),
which partially solves an open problem in \cite{MTW}.

However, if rings have singularities,
F-thresholds do not coincide with F-jumping coefficients in general.
In \cite{HMTW},
Huneke, Musta\c{t}\v{a}, Takagi and Watanabe studied 
various topics of F-thresholds for general settings.
For example,
they defined a new invariant called the F-threshold of a module,
which coincides with an F-jumping coefficient for F-finite and F-regular
local normal $\QQQ$-Gorenstein rings.
As a corollary,
they proved 
an inequality between the F-threshold and the \mbox{F-pure} threshold,
which is the smallest \mbox{F-jumping} coefficient for a fixed ideal.
They also gave examples of non-regular rings and ideals
whose F-thresholds coincide with their F-pure thresholds.

In this paper, we consider F-thresholds and F-jumping coefficients
of monomial ideals for toric rings,
which are not necessarily regular.
We give the explicit formula of F-thresholds in section 3,
which is written in terms of
 cones corresponding to toric rings and Newton polyhedrons
corresponding to monomial ideals.
Using this formula,
we compare F-thresholds with \mbox{F-jumping} coefficients in section 4.
As applications,
we give a characterization of regularity of toric rings
defined by simplicial cones in Theorem \ref{charareg}.
We also prove the rationality of F-thresholds
of monomial ideals for toric rings defined by
simplicial cones in Theorem \ref{rational}.

\section{The definition of F-thresholds}
Throughout this paper, we assume that every ring $R$ is reduced, 
and contains a perfect field $k$ whose characteristic is $p > 0$.
Let $F:R \to R$ be the Frobenius map which sends an element $x$ 
of $R$ to $x^p$.
For a positive integer $e$,
the ring $R$ viewed as an $R$-module 
via the $e$-times iterated Frobenius map is denoted by $\mbox{}^e R$.
We assume that a ring $R$ is F-finite,
that is, $\mbox{}^1R$ is a finitely generated \mbox{$R$-module.}
We also assume that a ring $R$ is F-pure,
that is, the Frobenius map $F$ is pure.
For an ideal $J$ and a positive integer $e$,
$J^{[p^e]}$ is the ideal generated
by $p^e$-th power elements of $J$.
For example, if $J$ is $(X_1, X_2^2) \subset k[X_1, X_2]$,
then $J^{[p^e]}$ is $(X_1^{p^e}, X_2^{2p^e})$.
We recall the definition and some remarks of \mbox{F-thresholds}
which are defined by Musta\c{t}\v{a}, Takagi and Watanabe
in \cite{MTW}.
These are invariants of a pair of ideals.

\begin{Def}[{F-threshold, cf.\ \cite[\S 1]{MTW}}]
Let $\aaa$ and $J$ be nonzero proper ideals of a ring $R$
such that $\aaa \subseteq \sqrt{J}$.
The $p^e$-th threshold $\nu_{\aaa}^J(p^e)$
of $\aaa$ with respect to $J$ 
is defined as
$$\nu_{\aaa}^J(p^e):= \max \{ r \in \mathbb{N}|
 \aaa^r \nsubseteq J^{[p^e]}\}.$$
Then we define the F-threshold $\cc^J(\aaa)$
of $\aaa$ with respect to $J$ as
$${\cc}^J(\aaa):= \lim_{e \to \infty}
 \frac{\nu_{\aaa}^J(p^e)}{p^e}.$$
\end{Def}

\begin{Rem}
Since $R$ is F-pure,
if $u \notin J^{[p^e]}$, then $u^{p} \notin J^{[p^{e+1}]}$.
This implies that $\nu_{\aaa}^J(p^e)/p^e \le
 \nu_{\aaa}^J(p^{e+1})/p^{e+1}$,
and hence $\cc^J(\aaa)$ exists under our assumption.
Furthermore, the assumption that $\aaa \subseteq \sqrt{J}$
implies that $\cc^J(\aaa) < \infty$.
However, in general, 
this limit does not necessarily exist.
In \cite{HMTW}, Huneke, Musta\c{t}\v{a}, Takagi and Watanabe defined
$\cc_{-}^J(\aaa)$ and $\cc_{+}^J(\aaa)$ as
$$\cc_{-}^J(\aaa):=\lim \inf  \frac{\nu_{\aaa}^J(p^e)}{p^e},\
\cc_{+}^J(\aaa):= \lim \sup \frac{\nu_{\aaa}^J(p^e)}{p^e},$$
for ideals $\aaa$ and $J$ with $\aaa \subseteq \sqrt{J}$.
When $\cc_{-}^J(\aaa)=\cc_{+}^J(\aaa)$,
they call it the F-threshold of $\aaa$ with respect to $J$,
which is denoted by $\cc^J(\aaa)$.
They give a sufficient condition when $\cc^J(\aaa)$ exists
(cf.\ \cite[Lemma 2.3]{HMTW}).
\end{Rem}

Let $R^{\circ}$ be the set of elements of $R$ which are not contained 
in any minimal prime ideals of $R$.
Let $\aaa$ be an ideal of $R$ 
such that $\aaa \cap R^{\circ} \neq \emptyset$,
and let $c$ be a positive real number.
For an $R$-module $D$, 
we define the $\aaa^c$-tight closure of 
the zero submodule in $D$ as the following,
which is denoted by $0_D^{*\aaa^c}$.
For $z \in D$,
an element $z$ is contained in  $0_D^{*\aaa^c}$
if there exists $x \in R^{\circ}$ such that
$$x \aaa^{\lceil c p^e \rceil} (1 \otimes z) = 0 
\in \mbox{}^e R \otimes D,$$
where $e$ runs all sufficiently large positive integers.

\begin{Def}[test ideal]
Let $\aaa \subseteq R$ be an ideal such that 
$\aaa \cap R^{\circ} \neq \emptyset$,
and $c$ a positive real number.
Let $E:= \oplus_{\mmm} E_R(R /\mmm)$,
where $\mmm$ runs all maximal ideals of $R$
and $E_R(R/ \mmm)$ is the injective hull 
of the residue field $R/\mmm$.
The test ideal $\tau(\aaa^c)$ of $\aaa$ and $c$
is defined as
$$\tau(\aaa^c) := \bigcap_{D \subseteq E}
{\rm{Ann}}_R 0_D^{*^{\aaa^c}},$$
where $D$ runs all finitely generated $R$-submodules of $E$.
\end{Def}

In \cite{MTW}, they also proved the connection between F-thresholds and
test ideals for regular local rings.
Moreover, in \cite{BMS2}, they generalized it for
regular rings.

\begin{Thm}[{\cite[Proposition 2.7]{MTW}} and {\cite[Proposition
 2.23]{BMS2}}]\label{MTWprop27}
Let $\aaa$ and $J$ be proper ideals on a regular ring $R$
such that $\aaa \subseteq \sqrt{J}$.
Then
$$\tau(\aaa^{\cc^J(\aaa)}) \subseteq J.$$
On the other hand, for a positive real number $c$, we have
$\aaa \subseteq \sqrt{\tau(\aaa^c)}$, and also
$$\cc^{\tau(\aaa^c)}(\aaa) \le c.$$
In addition, there exists a map from the set of
 F-thresholds of $\aaa$ to the set of test ideals of $\aaa$
which sends the test ideal $J$ to $\cc^J(\aaa)$.
Moreover, this map is bijective.
The inverse map sends an F-threshold $c$ of $\aaa$ to
 $\tau(\aaa^c)$.
\end{Thm}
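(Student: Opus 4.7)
The plan is to leverage the explicit description of test ideals on an F-finite regular ring due to Blickle--Musta\c{t}\v{a}--Smith. On such a ring, for any ideal $I$ the operation $I \mapsto I^{[1/p^e]}$, defined as the smallest ideal $K$ with $I \subseteq K^{[p^e]}$, is well defined by the flatness of Frobenius, and one has
$$\tau(\aaa^c) = (\aaa^{\lceil c p^e \rceil})^{[1/p^e]} \qquad \text{for all } e \gg 0.$$
With this formula in hand the theorem reduces to a careful bookkeeping comparison between the exponents $\lceil c p^e \rceil$ and $\nu_\aaa^J(p^e)$, together with the formal properties of the operators $(-)^{[p^e]}$ and $(-)^{[1/p^e]}$.

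For $\tau(\aaa^{\cc^J(\aaa)}) \subseteq J$ I would start from the tautology $\aaa^{\nu_\aaa^J(p^e)+1} \subseteq J^{[p^e]}$. F-purity makes the sequence $\nu_\aaa^J(p^e)/p^e$ non-decreasing with supremum $\cc^J(\aaa)$, and a Frobenius-power argument refines this to the strict inequality $\nu_\aaa^J(p^e) < \cc^J(\aaa) p^e$ at every stage, so $\lceil \cc^J(\aaa) p^e\rceil \ge \nu_\aaa^J(p^e) + 1$. Consequently $\aaa^{\lceil \cc^J(\aaa) p^e\rceil} \subseteq J^{[p^e]}$, and applying $(-)^{[1/p^e]}$ together with the BMS formula yields the claim.

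The second half is parallel. The BMS formula itself gives $\aaa^{\lceil c p^e\rceil} \subseteq \tau(\aaa^c)^{[p^e]} \subseteq \tau(\aaa^c)$ for $e \gg 0$, proving $\aaa \subseteq \sqrt{\tau(\aaa^c)}$ and, from the definition of the threshold, $\nu_\aaa^{\tau(\aaa^c)}(p^e) \le \lceil c p^e\rceil - 1$; dividing by $p^e$ and passing to the limit gives $\cc^{\tau(\aaa^c)}(\aaa) \le c$. The bijection is then assembled by combining the two inequalities: for a test ideal $J = \tau(\aaa^{c_0})$, the first half applied to $\cc^J(\aaa) \le c_0$ together with monotonicity of $c \mapsto \tau(\aaa^c)$ forces $\tau(\aaa^{\cc^J(\aaa)}) = J$, while the opposite composition is the identity by the same argument read off the jumps of the test-ideal filtration. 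I expect the main obstacle to be the strict-inequality claim $\nu_\aaa^J(p^e)/p^e < \cc^J(\aaa)$—equivalently, that the monotone sequence never attains its limit at a finite stage—which is where the regularity hypothesis on $R$ is essentially used, via the flatness of Frobenius that underwrites the $(-)^{[1/p^e]}$ operation in the first place.
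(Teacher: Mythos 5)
The paper does not prove Theorem~\ref{MTWprop27}; it cites it directly from \cite{MTW} and \cite{BMS2}, so there is no proof in the text to compare against. Judging your proposal on its own, the skeleton is the standard Blickle--Musta\c{t}\v{a}--Smith argument and is essentially correct: the identity $\tau(\aaa^c)=\bigl(\aaa^{\lceil cp^e\rceil}\bigr)^{[1/p^e]}$ for $e\gg 0$, valid for F-finite regular $R$ by flatness of Frobenius, does reduce both inclusions to the bookkeeping between $\lceil cp^e\rceil$ and $\nu_\aaa^J(p^e)$, and the assembly of the bijection from the two inequalities is carried out correctly (if $J=\tau(\aaa^{c_0})$, then $\cc^J(\aaa)\le c_0$ forces $\tau(\aaa^{\cc^J(\aaa)})\supseteq\tau(\aaa^{c_0})=J$, and with the first inclusion this pins down $\tau(\aaa^{\cc^J(\aaa)})=J$).

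The one genuine gap is precisely the point you flag at the end. You assert that ``a Frobenius-power argument refines'' the weak inequality $\nu_\aaa^J(p^e)\le \cc^J(\aaa)\,p^e$ to a strict one, but you do not give that argument, and it is not a formal consequence of F-purity (F-purity gives only $\nu_\aaa^J(p^{e+1})\ge p\,\nu_\aaa^J(p^e)$, which is compatible with the limit being attained at a finite stage). Without strictness, the step $\lceil \cc^J(\aaa)p^e\rceil\ge\nu_\aaa^J(p^e)+1$ can fail exactly when $\cc^J(\aaa)p^e$ is an integer equal to $\nu_\aaa^J(p^e)$, which is the one case you must exclude. A cleaner route that avoids the issue entirely is to observe that for any $\lambda>\cc^J(\aaa)$ one has $\lceil\lambda p^e\rceil>\cc^J(\aaa)p^e\ge\nu_\aaa^J(p^e)$, hence $\aaa^{\lceil\lambda p^e\rceil}\subseteq J^{[p^e]}$ and $\tau(\aaa^\lambda)\subseteq J$; one then invokes right-continuity of $c\mapsto\tau(\aaa^c)$ (i.e., discreteness of F-jumping numbers on an F-finite regular ring, which is part of the BMS package already being used for $(-)^{[1/p^e]}$) to conclude $\tau(\aaa^{\cc^J(\aaa)})=\tau(\aaa^\lambda)\subseteq J$ for $\lambda$ slightly above $\cc^J(\aaa)$. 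Either supply a proof of the strict inequality or substitute this right-continuity argument; as written, the central inclusion is not actually established.
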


By the two inequalities in Theorem \ref{MTWprop27},
F-thresholds on a regular ring are equal to 
F-jumping coefficients.
They are analogues of jumping coefficients of a multiplier ideal.


\begin{Cor}\label{threisjump}
For a fixed nonzero proper ideal $\aaa$ 
on a regular ring $R$,
the set of F-thresholds of $\aaa$ is equal to the set of F-jumping
coefficients of $\aaa$.
\end{Cor}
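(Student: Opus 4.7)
The plan is to exploit the bijection in Theorem \ref{MTWprop27} between the set of F-thresholds of $\aaa$ and the set of test ideals of $\aaa$, with forward map $J \mapsto \cc^J(\aaa)$ and inverse $c \mapsto \tau(\aaa^c)$. The two inequalities $\tau(\aaa^{\cc^J(\aaa)}) \subseteq J$ and $\cc^{\tau(\aaa^c)}(\aaa) \le c$ supplied by that theorem are the only substantive ingredients; the rest is a matching argument.

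First I would show that every F-threshold is an F-jumping coefficient. Let $c = \cc^J(\aaa)$ for some $J$, and write $J' := \tau(\aaa^c)$, which by the bijection satisfies $\cc^{J'}(\aaa) = c$. Suppose for contradiction that $c$ is not an F-jumping coefficient. Then there exists $\varepsilon > 0$ with $\tau(\aaa^{c-\varepsilon}) = \tau(\aaa^c) = J'$. Applying the second inequality of Theorem \ref{MTWprop27} at the exponent $c-\varepsilon$ gives $\cc^{J'}(\aaa) \le c - \varepsilon$, contradicting $\cc^{J'}(\aaa) = c$.

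Conversely, I would show that every F-jumping coefficient is an F-threshold. Let $c$ be an F-jumping coefficient, set $J := \tau(\aaa^c)$ — a test ideal by construction — and let $c' := \cc^J(\aaa)$, which is then an F-threshold. By the bijection, $\tau(\aaa^{c'}) = J = \tau(\aaa^c)$, and Theorem \ref{MTWprop27} yields $c' \le c$. If $c' < c$, setting $\varepsilon := c - c' > 0$ produces $\tau(\aaa^{c-\varepsilon}) = \tau(\aaa^{c'}) = \tau(\aaa^c)$, contradicting the F-jumping property at $c$. Hence $c = c'$ is an F-threshold.

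Once Theorem \ref{MTWprop27} is in hand the argument is essentially a tautology, so I do not anticipate a serious obstacle; the only care needed is to avoid circularity by feeding the bijection only objects that are manifestly test ideals (such as $J := \tau(\aaa^c)$) or manifestly F-thresholds (such as $\cc^J(\aaa)$), and not arbitrary real numbers whose status is what we are trying to determine.
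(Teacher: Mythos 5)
Your proposal is correct and follows the same route as the paper, which simply observes that the corollary is a formal consequence of the two inequalities and the bijection in Theorem \ref{MTWprop27} without writing out the details. Your two contradiction arguments are exactly the expected unpacking of that remark, carefully feeding only bona fide test ideals and F-thresholds into the bijection, so there is nothing to add.
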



\section{A formula of F-thresholds on toric rings}
Let us begin with fixing the notation about toric geometries.
Let \mbox{$N \cong \ZZZ^d$}
and \mbox{$M \cong \Hom_{\ZZZ}(N, \ZZZ)$} which is isomorphic to
$\ZZZ^d$.
The duality pair of \mbox{$M_{\RRR}:=M \otimes_{\ZZZ}\RRR$}
and $N_{\RRR}:=N \otimes_{\ZZZ}\RRR$ is denoted by
$$\langle \ , \ \rangle :M_{\RRR} \times N_{\RRR}
\to \RRR.$$
For a strongly convex rational polyhedral cone $\sigma$
of $N_{\RRR}$,
we define
$$\sigma^{\vee}:=\{u \in M_{\RRR}|
\langle u, v \rangle \ge 0, \forall v \in \sigma\}.$$
Let $R$ be a toric ring defined by $\sigma$, that is,
the subalgebra of Laurent polynomial $k[X_1^{\pm 1}, \cdots, X_d^{\pm 1}]$
generated by sets $\{X^u | u \in \sigma^{\vee} \cap M\}$,
 where $X^u$ expresses $X_1^{u_1}\cdots X_d^{u_d}$
for $u=(u_1, \cdots , u_d) \in M$.
Since we always assume that $k$ is a perfect field,
a toric ring is F-finite under our assumption.
A proper ideal $\aaa$ of $R$ is said to be 
a monomial ideal if $\aaa$
is generated by monomials of $R \subset k[X_1^{\pm 1},\cdots , X_d^{\pm 1}]$. 
For a monomial ideal $\aaa$,
we define two types of sets in $\sigma^{\vee}$.

\begin{Def}
The Newton polyhedron $\PP(\aaa)$ of $\aaa$
is defined as
$$\PP(\aaa):=\conv 
\{u \in M|X^u \in \aaa\}.$$
Moreover, we define
$$\QQ(\aaa):=\bigcup_{X^u \in \aaa}
 u + \sigma^{\vee}.$$
In addition, for $\PP(\aaa)$ and a positive
real number $\lambda$, the sets $\lambda \PP(\aaa)$
is defined as
$$\lambda \PP(\aaa) := \{\lambda u \in M_{\RRR}
|u \in \PP(\aaa)\}.$$
We can define $\lambda \QQ(\aaa)$ by the same
way. 
\end{Def}

The following proposition is basic properties of $\QQ(\aaa)$ and 
$\PP(\aaa)$,
which follows immediately.

\begin{Prop}\label{pro_of_PQ}
Let $\aaa$ be a monomial ideal of a toric ring $R$ defined by 
a cone $\sigma \subseteq N_{\RRR}$.
\begin{enumerate}
\item[(i).] For $e \in \ZZZ_{>0}$, it holds that
$\QQ(\aaa)=(1/p^e)\QQ(\aaa^{[p^e]})$.
\item[(ii).] $\PP(\aaa)+ \sigma^{\vee}
\subseteq \PP(\aaa)$.
\item[(iii).] If $\aaa=(X^{\mathbf{a}_1},\cdots,
X^{\mathbf{a}_s})$,
then \mbox{$\PP(\aaa)=
\conv\{\mathbf{a}_1,\cdots,\mathbf{a}_s\} + \sigma^{\vee}$}.
\end{enumerate}
\end{Prop}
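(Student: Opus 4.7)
The plan is to exploit the standard decomposition $R = \bigoplus_{w \in \sigma^\vee \cap M} k \cdot X^w$, which combined with the fact that $\aaa$ is $k$-spanned by monomials gives the key identity
\[
\{u \in M \mid X^u \in \aaa\} = \bigcup_{i=1}^{s} \bigl(\mathbf{a}_i + (\sigma^\vee \cap M)\bigr)
\]
whenever $\aaa = (X^{\mathbf{a}_1}, \ldots, X^{\mathbf{a}_s})$. Starting from this, I would prove (i) and (iii) directly from the generator data and then obtain (ii) as a formal consequence of (iii).

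For (i), every region $u + \sigma^\vee$ with $X^u \in \aaa$ lies inside some $\mathbf{a}_i + \sigma^\vee$ by the identity above, so $\QQ(\aaa) = \bigcup_{i}(\mathbf{a}_i + \sigma^\vee)$. Since $\aaa^{[p^e]} = (X^{p^e \mathbf{a}_1}, \ldots, X^{p^e \mathbf{a}_s})$, the same formula gives $\QQ(\aaa^{[p^e]}) = \bigcup_{i}(p^e\mathbf{a}_i + \sigma^\vee)$. Multiplying through by $1/p^e$ and using that $\sigma^\vee$ is invariant under positive scaling then yields $\QQ(\aaa) = (1/p^e)\QQ(\aaa^{[p^e]})$.

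For (iii), the inclusion $\PP(\aaa) \subseteq \conv\{\mathbf{a}_1, \ldots, \mathbf{a}_s\} + \sigma^\vee$ is routine: the right-hand side is convex and contains every $\mathbf{a}_i + w$ with $w \in \sigma^\vee \cap M$. The reverse inclusion is the one nontrivial point, and it reduces to the lattice lemma $\sigma^\vee = \conv(\sigma^\vee \cap M)$. I would prove this by writing an arbitrary $v \in \sigma^\vee$ as a conic combination $v = \sum_j \alpha_j r_j$ of primitive lattice generators of $\sigma^\vee$ and then picking a positive integer $N$ with $\sum_j \alpha_j/N \le 1$, which lets me rewrite
\[
v = \sum_j \frac{\alpha_j}{N}(N r_j) + \Bigl(1 - \sum_j \frac{\alpha_j}{N}\Bigr)\cdot 0
\]
as a genuine convex combination of elements of $\sigma^\vee \cap M$. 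Translating by $\mathbf{a}_i$ gives $\mathbf{a}_i + \sigma^\vee = \conv(\mathbf{a}_i + \sigma^\vee \cap M) \subseteq \PP(\aaa)$, and taking convex hulls over $i$ finishes (iii).

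Finally, (ii) follows at once from (iii) together with $\sigma^\vee + \sigma^\vee = \sigma^\vee$: adding $\sigma^\vee$ to $\conv\{\mathbf{a}_i\} + \sigma^\vee$ absorbs into the second summand. The main obstacle, such as it is, is really only the approximation argument establishing $\sigma^\vee = \conv(\sigma^\vee \cap M)$; everything else is formal bookkeeping with the generator description of $\aaa$.
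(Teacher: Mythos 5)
Your proposal is correct. The paper gives no proof of this proposition at all---it is introduced with ``which follows immediately''---so there is nothing in the source to compare your argument against. The decomposition $\{u \in M : X^u \in \aaa\} = \bigcup_i \bigl(\mathbf{a}_i + (\sigma^\vee \cap M)\bigr)$, the consequent identification $\QQ(\aaa) = \bigcup_i(\mathbf{a}_i + \sigma^\vee)$, and the lattice fact $\sigma^\vee = \conv(\sigma^\vee \cap M)$ (proved by your rescaling trick using $0 \in \sigma^\vee \cap M$) are exactly the ``immediate'' observations the author is implicitly invoking, and your deductions of (i), (iii), and then (ii) from them are sound. Two tiny remarks: in deriving $\conv\bigl(\bigcup_i(\mathbf{a}_i + (\sigma^\vee\cap M))\bigr) \supseteq \conv\{\mathbf{a}_i\} + \sigma^\vee$ you implicitly need to distribute, writing $\sum_i c_i \mathbf{a}_i + \sum_l d_l w_l = \sum_{i,l} c_i d_l(\mathbf{a}_i + w_l)$, which is worth spelling out; and the existence of primitive lattice generators $r_j$ of $\sigma^\vee$ is where the rationality of $\sigma$ enters, so it deserves a word. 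Neither affects correctness.
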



Using this notation,
we give a computation of F-thresholds in real affine geometries.
This formula is a generalization of \cite[Eample 2.7]{HMTW}.
Let $R$ be a toric ring defined by a cone 
$\sigma \subseteq N_{\RRR}$.
Let $\aaa$ be a monomial ideal.
For $u \in \sigma^{\vee}$, we define $\lambda_{\aaa}(u)$ as
\begin{eqnarray*}
\lambda_{\aaa}(u):= \left\{\begin{array}{ll}
\sup\{\lambda \in \RRR_{\ge 0} |
u \in \lambda \PP(\aaa)\} &
(\exists \lambda \in \RRR_{>0} \ s. t.\
 u \in \lambda \PP(\aaa)),\\
0 & (\forall \lambda \in \RRR_{>0},\
 u \notin \lambda \PP(\aaa)).
\end{array} \right.
\end{eqnarray*}

\begin{Thm}\label{Thmfthre}
Let $R$ and $\aaa$ be as the above.
Let $J$ be a monomial ideal such that $\aaa \subseteq \sqrt{J}$.
Then
$$\cc^J({\aaa})= \sup_{u \in \sigma^{\vee} \setminus
 \QQ(J)} \lambda_{\aaa}(u).$$
\end{Thm}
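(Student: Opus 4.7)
The plan is to reduce the F-threshold to a lattice-combinatorial optimization and then pass to a continuous limit. Fix generators $\aaa = (X^{\mathbf{a}_1}, \ldots, X^{\mathbf{a}_s})$. Since $\QQ(J) + \sigma^{\vee} \subseteq \QQ(J)$ and $\QQ(J^{[p^e]}) = p^e \QQ(J)$ by Proposition~\ref{pro_of_PQ}, a monomial $X^u$ belongs to $J^{[p^e]}$ precisely when $u \in p^e \QQ(J)$. Because $\aaa^r$ is generated by the monomials $X^{\mathbf{a}_{i_1} + \cdots + \mathbf{a}_{i_r}}$, the condition $\aaa^r \nsubseteq J^{[p^e]}$ is equivalent to the existence of $(m_1, \ldots, m_s) \in \ZZZ_{\ge 0}^s$ with $\sum_i m_i = r$ and $\sum_i m_i \mathbf{a}_i \notin p^e \QQ(J)$. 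Substituting $\mu_i = m_i/p^e$ then yields
\[ \frac{\nu^J_{\aaa}(p^e)}{p^e} = \max\Bigl\{ \sum_{i=1}^{s} \mu_i \;\Big|\; \mu_i \in \tfrac{1}{p^e}\ZZZ_{\ge 0},\ \sum_{i} \mu_i \mathbf{a}_i \notin \QQ(J) \Bigr\}. \]

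Next I would show that this quantity tends to $T := \sup\{\sum_i \mu_i : \mu_i \in \RRR_{\ge 0},\ \sum_i \mu_i \mathbf{a}_i \notin \QQ(J)\}$. The upper bound $\nu_{\aaa}^{J}(p^e)/p^e \le T$ is immediate from the display above. For the matching lower bound, given $(\mu_i^{\ast}) \in \RRR_{\ge 0}^{s}$ with $\sum_i \mu_i^{\ast} \mathbf{a}_i \notin \QQ(J)$, set $m_i^{(e)} := \lfloor p^e \mu_i^{\ast} \rfloor$. Then $\sum_i (m_i^{(e)}/p^e)\mathbf{a}_i \to \sum_i \mu_i^{\ast} \mathbf{a}_i$, and since $\QQ(J)$ is closed the approximants avoid $\QQ(J)$ for $e$ sufficiently large, giving $\nu_{\aaa}^{J}(p^e)/p^e \ge \sum_i m_i^{(e)}/p^e \to \sum_i \mu_i^{\ast}$. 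Taking the supremum over $(\mu_i^{\ast})$ produces $\cc^J(\aaa) = T$.

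Finally I would identify $T$ with $\sup_{u \in \sigma^{\vee} \setminus \QQ(J)} \lambda_{\aaa}(u)$ via the change of variables $u = \sum_i \mu_i \mathbf{a}_i = \lambda c$, where $\lambda = \sum_i \mu_i > 0$ and $c = \sum_i (\mu_i/\lambda)\mathbf{a}_i \in \conv\{\mathbf{a}_i\}$. Any feasible $(\mu_i)$ yields $u \in \lambda \conv\{\mathbf{a}_i\} \subseteq \lambda \PP(\aaa)$ with $u \notin \QQ(J)$, so $\lambda_{\aaa}(u) \ge \lambda$. Conversely, for $u \in \sigma^{\vee} \setminus \QQ(J)$ and any $\lambda < \lambda_{\aaa}(u)$, Proposition~\ref{pro_of_PQ}(iii) allows the decomposition $u = \lambda c + v$ with $c \in \conv\{\mathbf{a}_i\}$ and $v \in \sigma^{\vee}$; the property $\QQ(J) + \sigma^{\vee} \subseteq \QQ(J)$ forces $\lambda c \notin \QQ(J)$, and expanding $c$ as a convex combination of the $\mathbf{a}_i$ produces a feasible tuple of total mass $\lambda$. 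The main obstacle I anticipate is the lower-bound step in the limit argument: the optimal real tuple $(\mu_i^{\ast})$ rarely satisfies $p^e\mu_i^{\ast} \in \ZZZ$, so one must control the $O(1/p^e)$ rounding error while keeping the approximants outside the closed set $\QQ(J)$. This is really the only analytic ingredient; everything else is convex-geometric bookkeeping around Proposition~\ref{pro_of_PQ}.
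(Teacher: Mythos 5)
Your argument is correct and takes a genuinely different—and in one respect cleaner—route than the paper's. The reduction of $\nu_{\aaa}^J(p^e)/p^e$ to the discrete optimization, and the change of variables identifying $T$ with $\sup_{u\in\sigma^\vee\setminus\QQ(J)}\lambda_\aaa(u)$ via Proposition~\ref{pro_of_PQ}(iii) and $\QQ(J)+\sigma^\vee\subseteq\QQ(J)$, both match the shape of the paper's Claim~1 (upper bound) and its conversion to $\lambda_\aaa$. The real divergence is in the lower bound. The paper's Claim~2 rounds \emph{up}, setting $r_i:=\lceil\lceil p^e\lambda_\aaa(u)\rceil c_i\rceil$ so that the approximant's total mass is $\ge\lambda_\aaa(u)$; but rounding up can push the point \emph{into} $\QQ(J)$, so the paper spends Steps 1--2 locating a reference point $u'$ on a slightly dilated copy of $\PP(\aaa)$ that stays off $\QQ(J)$ and then controlling $\lvert u''-u'\rvert < \tfrac{1}{p^e}\sum|\mathbf{a}_i|$. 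You instead round \emph{down} with $m_i^{(e)}=\lfloor p^e\mu_i^*\rfloor$, accepting a total mass that is only asymptotically $\ge\sum\mu_i^*$. The payoff is that your anticipated "main obstacle" is in fact a non-issue: since $\mu_i^*-m_i^{(e)}/p^e\ge 0$ and each $\mathbf{a}_i\in\sigma^\vee$, the difference $\sum\mu_i^*\mathbf{a}_i-\sum(m_i^{(e)}/p^e)\mathbf{a}_i$ lies in $\sigma^\vee$, and the complement $\sigma^\vee\setminus\QQ(J)$ is closed under subtracting elements of $\sigma^\vee$ (because $\QQ(J)+\sigma^\vee\subseteq\QQ(J)$); hence $\sum(m_i^{(e)}/p^e)\mathbf{a}_i\notin\QQ(J)$ for \emph{every} $e$, with no appeal to closedness of $\QQ(J)$ or to "$e$ sufficiently large" at all. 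What your approach buys is the elimination of the paper's geometric error-control (Steps 1--2 of Claim~2); what the paper's approach buys is the slightly stronger statement that for each fixed $u$ some single $\nu_\aaa^J(p^e)/p^e$ already exceeds $\lambda_\aaa(u)$, rather than only in the limit. For the purpose of computing $\cc^J(\aaa)$, the two are interchangeable.
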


\begin{proof}
We assume that
$\aaa=(X^{\mathbf{a}_1}, \cdots, X^{\mathbf{a}_s})$ 
where $\mathbf{a}_i \in M$ for $i=1, \cdots, s$.
To prove the theorem, we need the following two claims.
\begin{Cl}
For any positive integers $e$, there exists 
$u \in \sigma^{\vee} \setminus \QQ(J)$ such that
$\nu_{\aaa}^J(p^e)/p^e \leq \lambda_{\aaa}(u)$. 
\end{Cl}
\begin{Cl}
For every element $u \in \sigma^{\vee} \setminus \QQ(J)$, there
 exists a positive integer $e$ such that 
$\nu_{\aaa}^J(p^e) / p^e \ge \lambda_{\aaa}(u)$.
\end{Cl}
We note that Claim 1 implies
$\cc^J(\aaa) \le \sup \lambda_{\aaa}(u)$.
Since the definition of right-hand side supremum,
$\nu_{\aaa}^J(p^e) / p^e \leq \sup \lambda_{\aaa}(u)$.
Thus
$\cc^J(\aaa) \leq \sup \lambda_{\aaa}(u)$
by the definition of F-thresholds 
and the fact that a supremum is the minimum number in upper bounds.
By the similar argument, Claim 2 implies
\mbox{$\cc^J(\aaa) \ge \sup \lambda_{\aaa}(u)$.}
\begin{proof}[Proof of claim 1.]
We fix a positive integer $e$.
Since the definition of the \mbox{$p^e$-th} threshold,
for every $i=1, \cdots, s$, there are nonnegative integers $r_i$
such that $\sum r_i = \nu_{\aaa}^J(p^e)$
and $X^{\sum r_i \mathbf{a}_i} \notin J^{[p^e]}$.
In particular, $\sum r_i \mathbf{a}_i \notin \QQ(J^{[p^e]})$.
This is equivalent to 
$(1/p^e) \sum r_i \mathbf{a}_i \notin (1/p^e) \QQ(J^{[p^e]})$.
By Proposition \ref{pro_of_PQ} (i),
we have $(1/p^e) \sum r_i \mathbf{a}_i \notin  \QQ(J)$.
Hence
\begin{equation*}
\frac{1}{p^e} \sum r_i \mathbf{a}_i
=\frac{\nu_{\aaa}^J(p^e)}{p^e}
\sum \frac{r_i}{\nu_{\aaa}^J(p^e)} \mathbf{a}_i
\end{equation*}
which is an element of $(\nu_{\aaa}^J(p^e)/p^e) 
\PP(\aaa)$.
Thus $\nu_{\aaa}^J(p^e)/p^e 
\le \lambda_{\aaa}((1/p^e) \sum r_i \mathbf{a}_i)$.
\end{proof}
\begin{proof}[Proof of Claim 2.]
We fix $u \in \sigma^{\vee} \setminus \QQ(J)$,
which satisfies $\lambda_{\aaa}(u) \neq 0$.
We find an integer $e$ which satisfies the assertion of Claim 2 by
 three steps.
\newline
{\sc Step 1.}
We prove that
there exists an element $u'$ on the boundary
$(\lceil p^e \lambda_{\aaa}(u) \rceil/p^e) \PP(\aaa)$
such that $u' \notin \QQ(J)$ for sufficiently large $e$.
The following sequence of real numbers
$$ \lambda_{\aaa}(u) \le \cdots 
\le \frac{\lceil p^{e+1} \lambda_{\aaa}(u) \rceil}{p^{e+1}} 
\le \frac{\lceil p^e \lambda_{\aaa}(u) \rceil}{p^e} 
\le \cdots 
\le \frac{\lceil p\lambda_{\aaa}(u) \rceil}{p}$$
induces the sequence of Newton polyhedrons
$$\frac{\lceil p \lambda_{\aaa}(u) \rceil}{p}\PP(\aaa)
\subseteq \cdots \subseteq
\frac{\lceil p^e \lambda_{\aaa}(u) \rceil}{p^e}\PP(\aaa)
\subseteq \frac{\lceil p^{e+1} \lambda_{\aaa}(u) \rceil}{p^{e+1}}\PP(\aaa)
\subseteq \cdots \subseteq 
\lambda_{\aaa}(u)\PP(\aaa).$$
In particular, 
the above sequences are strict
if $\lambda_{\aaa}(u) \notin (1/p^e) \ZZZ$ for all $e$.
Since $u \notin \QQ(J)$,
we can find such $u'$ by taking $e$ sufficiently large.
\newline
{\sc Step 2.}
We prove that there exist nonnegative integers $r_i$ 
for every \mbox{$i=1, \cdots, s$}
such that $\sum r_i /p^e \ge \lambda_{\aaa}(u)$
and $u'':= \sum r_i \mathbf{a}_i /p^e \notin \QQ(J)$.
Since $u'$ is contained in
$(\lceil p^e \lambda_{\aaa}(u) \rceil/p^e)\PP(\aaa)$,
$u'$ can be written 
$$\frac{\lceil p^e \lambda_{\aaa}(u) \rceil}{p^e} (\sum c_i \mathbf{a}_i
+ \omega),$$
where $c_i$ are nonnegative real numbers with $\sum c_i =1$
and $\omega \in \sigma^{\vee}$ by Proposition \ref{pro_of_PQ} (iii).
Let 
$$r_i:=\lceil \lceil p^e \lambda_{\aaa}(u) \rceil c_i \rceil.$$
Then
$$\sum \frac{r_i}{ p^e} \ge \frac{\lceil p^e\lambda_{\aaa}(u) \rceil}{p^e}
\sum c_i \ge \lambda_{\aaa}(u).$$
Moreover,
$$|u''+\frac{\lceil p^e \lambda_{\aaa}(u) \rceil}{p^e}\omega - u'| 
\le \sum |\frac{\lceil \lceil p^e \lambda_{\aaa}(u) \rceil c_i \rceil}{p^e} 
- \frac{\lceil p^e \lambda_{\aaa}(u)\rceil c_i}{p^e}| \cdot |\mathbf{a}_i|
< \frac{1}{p^e}\sum |\mathbf{a}_i|.$$
Since $u' \notin \QQ(J)$,
we have $u'' + (\lceil p^e \lambda_{\aaa}(u) \rceil /p^e)\omega 
\notin \QQ(J)$
if we choose $e$ sufficiently large.
By the definition of $\QQ(J)$,
we have $u'' \notin \QQ(J)$.
\newline
{\sc Step 3.}
Since $u'' \notin \QQ(J)$,
$$p^e u'' \notin p^e\QQ(J)=\QQ(J^{[p^e]}).$$
Therefore $X^{p^eu''} \notin J^{[p^e]}$.
On the other hand,
$X^{p^eu''} \in \aaa^{\sum r_i}$
by the construction of $u''$.
Therefore $\sum r_i \le \nu_{\aaa}^J(p^e)$.
This implies $\lambda_{\aaa}(u) \le \nu_{\aaa}^J(p^e)/p^e$.
\end{proof}
We complete the proof of Theorem \ref{Thmfthre}.
\end{proof}

\section{A comparison between F-jumping coefficients and F-thresholds}
F-pure thresholds are defined via F-singularities of the
 pair $(R, \aaa^c)$ where $c$ is a positive real number.
See \cite[Definition 1.3, Definition 2.1]{TW} for details.
Since F-finite toric rings are strongly F-regular,
the F-pure thresholds can be defined as follows
(See \cite[Proposition 2.2]{TW}).

\begin{Def}[F-pure thresholds]
Let $R$ be a toric ring, and $\aaa$ a monomial ideal.
The F-pure threshold $\cc(\aaa)$ of $\aaa$ is
 defined as
$$\cc(\aaa) := \sup \{ c \in \RRR_{\ge 0}|
 \tau(\aaa^c)=R\}.$$
\end{Def}

Hence the F-pure threshold of $\aaa$ 
is the smallest F-jumping coefficient of $\aaa$.
In \cite{HMTW},
the inequality between an F-pure threshold and an F-threshold 
on a local ring was given 
in terms of the F-threshold of a module (\cite[Section 4.]{HMTW}).
In this section, we consider the inequality on toric rings,
by a combinatorial method.
Furthermore, we consider the connection between arbitrary 
F-jumping coefficients and F-thresholds with respect to some monomial
ideals.
To compute F-pure thresholds and F-jumping coefficients of monomial ideals,
we introduce the following theorem
given by Blickle. 

\begin{Thm}[{\cite[Theorem 3]{B}}]\label{Bthm3}
Let $R$ be the toric ring defined by 
\newline
\mbox{$\sigma = \RRR_{\ge 0}v_1 +
 \cdots + \RRR_{\ge 0}v_n \subseteq N_{\RRR}:= \RRR^d$},
where $v_j \in N$ are primitive.
Then the test ideal $\tau(\aaa^c)$ of a monomial ideal $\aaa$ 
is also a monomial ideal.
Moreover, $X^u \in \tau(\aaa^c)$ for $u \in M$
if and only if there exists $\omega \in M_{\RRR}$ such that
\begin{alignat*}{1}
\langle \omega, v_j \rangle \le 1,\ j=1,\cdots,n, \\
u + \omega \in \Int(c \PP(\aaa)). 
\end{alignat*}
\end{Thm}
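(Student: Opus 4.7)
The plan is to exploit two features of the toric setting: the algebraic torus action forces $\tau(\aaa^c)$ to be monomial, and the Frobenius trace on a toric ring admits an explicit combinatorial description in terms of the ray generators $v_j$.

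First, I would establish that $\tau(\aaa^c)$ is a monomial ideal. The algebraic torus $T=\Hom_{\ZZZ}(M,k^\times)$ acts on $R$ by scaling each monomial by a character; this action fixes $\aaa$ and commutes with Frobenius, so every ingredient in the definition of $\tau(\aaa^c)$ is $T$-equivariant. Hence $\tau(\aaa^c)$ is $T$-stable, hence monomial, and the problem reduces to deciding, for each fixed $u\in\sigma^\vee\cap M$, whether $X^u\in\tau(\aaa^c)$.

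Second, since F-finite toric rings are strongly F-regular, I would use the identity
$$\tau(\aaa^c)\;=\;\sum_{e\ge 1}\ \sum_{\phi\in\Hom_R({}^eR,R)}\phi\bigl({}^e\aaa^{\lceil c p^e\rceil}\bigr),$$
and then describe $\Hom_R({}^eR,R)$ combinatorially. As an $R$-module, ${}^eR$ decomposes into rank-one fractional-monomial summands indexed by cosets in $(1/p^e)M/M$ inside $(1/p^e)\sigma^\vee$, and each $R$-linear map ${}^eR\to R$ acts on these summands by a shift $-\omega$ for some $\omega\in (1/p^e)M$, sending $X^{u'/p^e}$ to $X^{u'/p^e-\omega}$ when the exponent lies in $\sigma^\vee\cap M$ and to zero otherwise. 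The crucial combinatorial fact is that such a map $\phi_\omega$ is a well-defined $R$-module homomorphism precisely when $\omega$ lies in the polyhedron $\{\omega\in M_\RRR:\langle\omega,v_j\rangle\le 1\ \text{for all } j\}$, since this is the dual description, weight by weight, of the image of $\sigma^\vee$ under the Frobenius pushforward.

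Combining these ingredients, $X^u\in\tau(\aaa^c)$ iff there exist $e\ge 1$, an admissible $\omega\in (1/p^e)M$, and $u'\in\aaa^{\lceil c p^e\rceil}\cap M$ with $u+\omega=u'/p^e$. By Proposition \ref{pro_of_PQ}(i), this means $u+\omega\in(\lceil c p^e\rceil/p^e)\PP(\aaa)$, and since $\lceil c p^e\rceil/p^e\searrow c$ as $e\to\infty$, the existence of such an $e$ is equivalent to the open condition $u+\omega\in\Int(c\,\PP(\aaa))$. The main obstacle I anticipate is justifying the parametrization of $\Hom_R({}^eR,R)$ by the polyhedron $\{\langle\omega,v_j\rangle\le 1\}$; this requires carefully computing the dualizing module of the toric ring and tracking how Frobenius pushforward acts on its weight decomposition. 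Once that dictionary is in place, the transition from the ceiling-indexed discrete statement to the open-interior formulation is a routine asymptotic argument.
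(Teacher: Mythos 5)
The paper does not prove this theorem; it is quoted directly from Blickle \cite{B}. Your proposal therefore has to stand as a self-contained argument. Its architecture is sound and is a genuinely Cartier-theoretic route: torus equivariance forces $\tau(\aaa^c)$ to be monomial, strong F-regularity gives $\tau(\aaa^c)=\sum_{e}\sum_{\phi\in\Hom_R({}^eR,R)}\phi({}^e\aaa^{\lceil cp^e\rceil})$, and then one analyzes $\Hom_R({}^eR,R)$ weight by weight. All of that is correct in outline.

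There is, however, a genuine error in the central combinatorial claim, and it is exactly the step you flag as the ``main obstacle.'' You assert that $\phi_\omega$ is a well-defined $R$-linear map precisely when $\langle\omega,v_j\rangle\le 1$ for all $j$. For fixed $e$ and $\omega\in(1/p^e)M$ this is false: the correct bound is $\langle\omega,v_j\rangle\le(p^e-1)/p^e$. Already for $R=k[x]$ (so $\sigma^\vee=\RRR_{\ge 0}$, $v_1=1$, $M=\ZZZ$), the module $\Hom_R({}^eR,R)$ is free with dual basis $\{\phi_\omega:\omega=j/p^e,\ 0\le j\le p^e-1\}$; the value $\omega=1$ satisfies your inequality $\langle\omega,v_1\rangle\le 1$ but does not define an $R$-linear map, since $\phi_1$ would have to send $X^0\mapsto 0$ while sending $X\mapsto X^0$, contradicting $R$-linearity against multiplication by $X$. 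The same failure occurs at each ray of a general $\sigma$, and since $\Hom_R({}^eR,R)$ is a reflexive module over the normal ring $R$, it is exactly this height-one computation at each $v_j$ that determines the admissible polytope. So the level-$e$ polytope of admissible $\omega$ is strictly smaller than $\{\langle\omega,v_j\rangle\le 1\}$ and depends on $e$; the inequality $\le 1$ only appears after taking the union over all $e$, and the resulting slack is precisely why the statement must use the interior $\Int(c\PP(\aaa))$.

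Relatedly, the final ``routine asymptotic argument'' has a second gap you pass over: $X^{u'}\in\aaa^{\lceil cp^e\rceil}$ gives $u'/p^e\in(1/p^e)\QQ(\aaa^{\lceil cp^e\rceil})$, which is properly contained in $(\lceil cp^e\rceil/p^e)\PP(\aaa)$, and conversely a point of $\Int(c\PP(\aaa))$ need not lie in any $(1/p^e)\QQ(\aaa^{\lceil cp^e\rceil})$ at a fixed $e$. Both directions of the ``iff'' therefore require letting $e\to\infty$ \emph{simultaneously} in the admissible-$\omega$ polytope, in $\lceil cp^e\rceil/p^e\searrow c$, and in the approximation of $\PP$ by $\QQ$. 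This is all doable, but it is not routine bookkeeping on top of a correct lemma; the lemma itself has to be stated with the $e$-dependent bound $(p^e-1)/p^e$ before the limit can be taken.
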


By this theorem,
the F-pure threshold of a monomial ideal 
on a toric ring can be described as the following corollary.

\begin{Cor}\label{fpuretoric}
Let $R$ and $\aaa$ be as in Theorem \ref{Bthm3}.
Then the F-pure threshold $\cc(\aaa)$ of $\aaa$
is described as
$$\cc(\aaa)= \sup_{u \in \sigma^{\vee} \setminus
\OO} \lambda_{\aaa}(u),
$$
where
$$\OO:= \{u \in \sigma^{\vee}|
\exists j, \ \langle u, v_j \rangle \ge 1\}.$$
\end{Cor}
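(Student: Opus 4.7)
The plan is to combine Blickle's theorem (Theorem \ref{Bthm3}) with the characterization $\cc(\aaa) = \sup\{c : 1 \in \tau(\aaa^c)\}$. Applying Theorem \ref{Bthm3} with $u = 0$ rewrites $\tau(\aaa^c) = R$ as the existence of some $\omega \in M_{\RRR}$ with $\langle\omega, v_j\rangle \le 1$ for all $j$ and $\omega \in \Int(c\PP(\aaa))$, so the task reduces to identifying
\[
\cc(\aaa) = \sup\bigl\{c : \exists\,\omega \in M_{\RRR},\ \langle\omega, v_j\rangle \le 1\ \forall j,\ \omega \in \Int(c\PP(\aaa))\bigr\}
\]
with $\sup_{u \in \sigma^{\vee} \setminus \OO} \lambda_{\aaa}(u)$.

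For the bound $\sup_{u \in \sigma^{\vee} \setminus \OO} \lambda_{\aaa}(u) \le \cc(\aaa)$, I fix $u \in \sigma^{\vee} \setminus \OO$ and $\lambda < \lambda_{\aaa}(u)$, and build $\omega$ satisfying Blickle's criteria at parameter $\lambda$. Since $u$ may lie on the boundary of $\lambda\PP(\aaa)$, I perturb by a small positive multiple of a vector $w \in \Int(\sigma^{\vee})$: set $\omega := u + \epsilon w$. Using Proposition \ref{pro_of_PQ}~(iii) to write $u = \lambda p + s_0$ with $p \in \conv\{\mathbf{a}_1,\ldots,\mathbf{a}_s\}$ and $s_0 \in \sigma^{\vee}$, we get $\omega = \lambda p + (s_0 + \epsilon w)$ with $s_0 + \epsilon w \in \Int(\sigma^{\vee})$, which forces $\omega \in \Int(\lambda\PP(\aaa))$. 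Since $w \in \Int(\sigma^{\vee})$ gives $\langle w, v_j\rangle > 0$ for every $j$, while $\langle u, v_j\rangle < 1$ by the assumption $u \notin \OO$, the estimate $\langle\omega, v_j\rangle = \langle u, v_j\rangle + \epsilon\langle w, v_j\rangle \le 1$ holds once $\epsilon > 0$ is small enough. Blickle's theorem then yields $\tau(\aaa^\lambda) = R$, so $\cc(\aaa) \ge \lambda$; letting $\lambda \uparrow \lambda_{\aaa}(u)$ proves the bound.

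For the reverse inequality, given $c < \cc(\aaa)$ and a corresponding $\omega$ from Blickle's theorem, observe $\omega \in c\PP(\aaa) \subseteq \sigma^{\vee}$. Setting $u := (1-\epsilon)\omega$ gives $\langle u, v_j\rangle \le 1-\epsilon < 1$, so $u \in \sigma^{\vee} \setminus \OO$, while the interior condition rescales as $u \in \Int((1-\epsilon)c\PP(\aaa))$, forcing $\lambda_{\aaa}(u) \ge (1-\epsilon)c$. Letting $\epsilon \to 0$ and $c \to \cc(\aaa)$ completes the proof.

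The main obstacle is the mismatch between Blickle's theorem, which couples a strict interior containment with a weak half-space bound, and the definition of $\OO$, which demands a strict half-space bound. Both directions are handled by a controlled perturbation inside $\sigma^{\vee}$ that converts slack from one inequality into the other.
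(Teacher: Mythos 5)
Your proof is correct and follows the same overall strategy as the paper --- both apply Theorem~\ref{Bthm3} at $u=0$ to translate $\tau(\aaa^c)=R$ into the existence of a vector $\omega$, and both use a rescaling by $1-\varepsilon$ to move from a $\le 1$ bound to a strict $<1$ bound in the direction $\cc(\aaa)\le\sup$. Where you genuinely differ, and actually improve on the paper, is in the other direction $\sup\le\cc(\aaa)$. The paper takes $u'\in\sigma^\vee\setminus\OO$ with $\lambda_{\aaa}(u')=\beta>\alpha$, asserts $u'\in\beta\PP(\aaa)$, and then claims ``in particular $u'\in\Int(\alpha\PP(\aaa))$.'' That step fails if $u'$ lies on the boundary of $\sigma^\vee$: for example with $\sigma^\vee=\RRR_{\ge0}^2$, $\aaa=(X_1)$, and $u'=(\beta,0)$, one has $u'\in\beta\PP(\aaa)$ but $u'\notin\Int(\alpha\PP(\aaa))=(\alpha,0)+\RRR_{>0}^2$ for any $\alpha$. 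Your explicit perturbation $\omega:=u+\epsilon w$ with $w\in\Int(\sigma^\vee)$ repairs exactly this gap: it pushes the candidate into $\Int(\lambda\PP(\aaa))$ by writing $\omega=\lambda p+(s_0+\epsilon w)$ with $s_0+\epsilon w\in\Int(\sigma^\vee)$, while the strict inequalities $\langle u,v_j\rangle<1$ leave enough slack (over the finitely many $j$) to keep $\langle\omega,v_j\rangle\le1$ for small $\epsilon$. Also, you argue both directions directly rather than by contradiction, which is slightly cleaner. In short: same route, but your version of the $\sup\le\cc(\aaa)$ step is the one that actually closes.
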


\begin{proof}
First, we assume that $\cc(\aaa)<\sup\lambda_{\aaa}(u)$.
Then there exists $\alpha \in \RRR_{\ge 0}$ such that
$$\cc(\aaa)< \alpha <\sup\lambda_{\aaa}(u).$$
By the definition of F-pure thresholds,
$\tau(\aaa^{\alpha}) \subsetneq R$.
Then there exists \mbox{$\beta \in \RRR_{\ge 0}$} such that
$$\alpha < \beta < \sup \lambda_{\aaa}(u)$$
and $\beta=\lambda_{\aaa}(u')$ 
for $u' \in \sigma^{\vee} \setminus \OO$.
This implies that $u' \in \beta \PP(\aaa)$.
In particular, $u' \in {\rm{Int} }(\alpha \PP(\aaa))$.
In addition, $\langle u', v_j \rangle <1$ for all $j$.
By Theorem \ref{Bthm3}, it contradicts that $\tau(\aaa^{\alpha})
 \subsetneq R$.
Therefore $\cc(\aaa) \ge \sup\lambda_{\aaa}(u)$.
Second, we assume  $\cc(\aaa)>\sup\lambda_{\aaa}(u)$.
There exists $\alpha \in \RRR_{\ge 0}$ such that
$$\sup \lambda_{\aaa}(u) < \alpha < \cc(\aaa)$$
and $\tau(\aaa^{\alpha})=R$.
This implies that
there exists $\omega \in \sigma^{\vee}$ such that
$\langle \omega, v_j \rangle \le 1$ for all $j$
and 
$$\omega \in \Int(\alpha \PP(\aaa)).$$
For $1 > \varepsilon > 0$,
we have $\langle (1-\varepsilon)\omega, v_j \rangle 
= 1-\varepsilon < 1$.
Thus $(1-\varepsilon)\omega \in \sigma^{\vee} 
\setminus \OO$.
On the other hand, since 
$\omega \in \Int(\alpha \PP(\aaa))$, 
it holds that 
$$(1-\varepsilon)\omega \in \alpha \PP(\aaa),$$
for sufficiently small $\varepsilon$.
Therefore
$$\sup_{u \in \sigma^{\vee} \setminus \OO} \lambda_{\aaa}(u) 
< \lambda_{\aaa}((1-\varepsilon)\omega),$$
which is a contradiction.
Thus $\cc(\aaa) \ge \sup \lambda_{\aaa}(u)$,
which completes the proof of the corollary.
\end{proof}

Using this presentation, we compare an F-pure threshold with
an \mbox{F-threshold} with respect to the maximal monomial ideal on a toric ring.

\begin{Prop}\label{fpurelefthre}
Let $R,\ \sigma$ and $\aaa$ be as in Corollary \ref{fpuretoric},
and $\mmm$ the maximal monomial ideal of $R$.
Then
$$\cc(\aaa) \le \cc^{\mmm}(\aaa).$$
\end{Prop}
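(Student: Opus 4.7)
The plan is to reduce the inequality to a set-theoretic inclusion inside $\sigma^{\vee}$ by combining the two explicit formulas already at our disposal: Theorem~\ref{Thmfthre} applied to $J=\mmm$ on the F-threshold side, and Corollary~\ref{fpuretoric} on the F-pure threshold side. Both quantities are suprema of the same function $\lambda_{\aaa}$ over subsets of $\sigma^{\vee}$, namely
\[
\cc(\aaa)=\sup_{u\in\sigma^{\vee}\setminus\OO}\lambda_{\aaa}(u),\qquad
\cc^{\mmm}(\aaa)=\sup_{u\in\sigma^{\vee}\setminus\QQ(\mmm)}\lambda_{\aaa}(u),
\]
so the proposition will follow at once if we establish the containment
\[
\QQ(\mmm)\cap\sigma^{\vee}\ \subseteq\ \OO,
\]
since then $\sigma^{\vee}\setminus\OO\subseteq\sigma^{\vee}\setminus\QQ(\mmm)$ and the supremum on the left is taken over a smaller set.

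To verify the inclusion, I would unwind the definitions. The maximal monomial ideal $\mmm$ is generated by the nonzero lattice points of $\sigma^{\vee}$, so by definition $\QQ(\mmm)=\bigcup_{v}(v+\sigma^{\vee})$ where $v$ ranges over $(\sigma^{\vee}\cap M)\setminus\{0\}$. Given $u=v+w$ with such a $v$ and $w\in\sigma^{\vee}$, the key observation is that every nonzero lattice point $v\in\sigma^{\vee}\cap M$ must satisfy $\langle v,v_{j}\rangle\ge 1$ for some generator $v_{j}$ of $\sigma$. Indeed, $\sigma$ is full dimensional (otherwise $\mmm$ would not be a maximal ideal), hence $\sigma^{\vee}$ is strongly convex, so a nonzero $v\in\sigma^{\vee}$ must pair strictly positively with at least one $v_{j}$; since both $v$ and $v_{j}$ are integral, this pairing is a positive integer, and in particular $\ge 1$. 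Adding the nonnegative quantity $\langle w,v_{j}\rangle$ preserves the inequality, so $\langle u,v_{j}\rangle\ge 1$ and $u\in\OO$.

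Having established $\QQ(\mmm)\cap\sigma^{\vee}\subseteq\OO$, the proof concludes with a one-line monotonicity of suprema. I do not foresee a real obstacle: the only point deserving care is the assumption that $\sigma$ is full dimensional, which is implicit once we refer to the maximal monomial ideal $\mmm$ of $R$. Everything else is a direct bookkeeping exercise using the two formulas already proved.
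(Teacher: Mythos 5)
Your proposal is correct and follows essentially the same route as the paper: both combine the formula of Theorem~\ref{Thmfthre} (for $J=\mmm$) with Corollary~\ref{fpuretoric} and reduce the inequality to the inclusion $\QQ(\mmm)\subseteq\OO$, which the paper dismisses as immediate and which you verify by noting that a nonzero lattice point in $\sigma^{\vee}$ pairs to a positive integer with some generator of $\sigma$. The only difference is one of detail: the paper observes it suffices to check $\QQ(\mmm)\cap M\subseteq\OO$ and leaves the rest implicit, whereas you spell out the strong-convexity/integrality argument and the subsequent lift to all of $\QQ(\mmm)$.
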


\begin{proof}
By the definitions, it is enough to show that
 $\QQ(\mmm) \subseteq \OO$.
In particular, it is enough to show $\QQ(\mmm) \cap M
\subseteq \OO$.
It follows immediately.
\end{proof}

\begin{Rem}
In general, for an ideal $\aaa$,
we have $\cc^{J'}(\aaa)\le\cc^J(\aaa)$,
where $J$ and $J'$ are ideals with $J \subseteq J'$ and
$\aaa \subseteq \sqrt{J}$.
Therefore
the F-pure threshold of $\aaa$ is less than or equal to
all F-thresholds of $\aaa$.
\end{Rem}

Now we generalize this comparison to arbitrary F-jumping
coefficients and F-thresholds.

\begin{Lem}\label{dualandpoly}
Let $R,\ \sigma$ and $\aaa$ be as in Theorem \ref{Bthm3} 
and $\omega,\ \omega' \in \sigma^{\vee}$. 
For all $j=1, \cdots, n$, we assume that
$$\langle\omega, v_j \rangle \le \langle\omega', v_j \rangle .$$
Then $\lambda_{\aaa}(\omega) \le \lambda_{\aaa}(\omega')$.
\end{Lem}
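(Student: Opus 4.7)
The plan is to reduce the lemma to a simple containment between dilations of $\PP(\aaa)$: I will show that for every $\lambda > 0$, if $\omega \in \lambda \PP(\aaa)$ then $\omega' \in \lambda\PP(\aaa)$. Once this is established, taking the supremum over all such $\lambda$ immediately gives $\lambda_{\aaa}(\omega) \le \lambda_{\aaa}(\omega')$; the degenerate case $\lambda_{\aaa}(\omega) = 0$ is trivial.

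The key observation is that $\omega' - \omega \in \sigma^{\vee}$. Indeed, the hypothesis gives $\langle \omega' - \omega, v_j \rangle \ge 0$ for every $j = 1, \dots, n$, and since $\sigma = \RRR_{\ge 0} v_1 + \cdots + \RRR_{\ge 0} v_n$, any nonnegative linear combination $v = \sum t_j v_j \in \sigma$ satisfies $\langle \omega' - \omega, v \rangle \ge 0$. By the definition of $\sigma^{\vee}$ this places $\omega' - \omega$ there.

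Now fix $\lambda > 0$ with $\omega \in \lambda \PP(\aaa)$ and write $\aaa = (X^{\mathbf{a}_1}, \dots, X^{\mathbf{a}_s})$. By Proposition \ref{pro_of_PQ}(iii),
\[
\lambda \PP(\aaa) = \conv\{\lambda \mathbf{a}_1, \dots, \lambda \mathbf{a}_s\} + \sigma^{\vee},
\]
so I can write $\omega = \sum c_i (\lambda \mathbf{a}_i) + \tau$ for some nonnegative reals $c_i$ with $\sum c_i = 1$ and some $\tau \in \sigma^{\vee}$. Then
\[
\omega' = \sum c_i (\lambda \mathbf{a}_i) + \bigl(\tau + (\omega' - \omega)\bigr),
\]
and since $\sigma^{\vee}$ is closed under addition, $\tau + (\omega' - \omega) \in \sigma^{\vee}$. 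Hence $\omega' \in \lambda \PP(\aaa)$, as desired.

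There is no real obstacle here: the argument is formal once one recognizes that the hypothesis is exactly saying $\omega' - \omega \in \sigma^{\vee}$ and combines this with the Minkowski decomposition of $\PP(\aaa)$ from Proposition \ref{pro_of_PQ}(iii). The only mildly subtle point is taking care that the supremum defining $\lambda_{\aaa}$ behaves well under this containment — but since the containment holds for every $\lambda > 0$ witnessing $\omega \in \lambda \PP(\aaa)$, this causes no trouble.
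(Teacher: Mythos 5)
Your proof is correct and follows essentially the same route as the paper: in both arguments the crux is observing that the hypothesis is equivalent to $\omega'-\omega\in\sigma^{\vee}$ and then using that dilations of $\PP(\aaa)$ are stable under adding elements of $\sigma^{\vee}$. The only cosmetic difference is that you invoke Proposition \ref{pro_of_PQ}(iii) (the Minkowski decomposition) to re-derive this stability, whereas the paper applies Proposition \ref{pro_of_PQ}(ii) directly at the supremum value $\lambda_{\aaa}(\omega)$; your version of taking the supremum afterwards is equally valid.
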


\begin{proof}
If $\lambda_{\aaa}(\omega)=0$, it is trivial.
We prove this lemma in the case $\lambda_{\aaa}(\omega) \neq 0$.
By the assumption, there exists $\omega'' \in \sigma^{\vee}$
such that $\omega'=\omega + \omega''$.
Let $\lambda:=\lambda_{\aaa}(\omega)$.
Since $\omega/\lambda \in \PP(\aaa)$,
\begin{equation*}
\frac{\omega'}{\lambda}=\frac{\omega}{\lambda}
+\frac{\omega''}{\lambda} \in \PP(\aaa)+\sigma^{\vee}.
\end{equation*}
By Proposition \ref{pro_of_PQ} (ii),
we have $\omega'/\lambda \in \PP(\aaa)$.
Hence $\lambda \le \lambda_{\aaa}(\omega')$.
\end{proof}

\begin{Prop}\label{fjumptoric}
Let $R,\ \sigma$ and $\aaa$ be as in Theorem \ref{Bthm3}.
For $u \in \sigma^{\vee} \cap M$,
we define the nonnegative number $\mu_{\aaa}(u)$ as
$$\mu_{\aaa}(u):=\sup_{\omega \in \sigma^{\vee}\setminus \OO}
\lambda_{\aaa}(u+\omega),$$
and the nonnegative number $\cc^i(\aaa)$ as
$$\cc^i(\aaa)= \inf_{X^u \in 
\tau(\aaa^{\cc^{i-1}(\aaa)})}
\mu_{\aaa}(u),$$
where $\cc^0(\aaa):=0$.
Then $\cc^i(\aaa)$ is the $i$-th F-jumping coefficient of $\aaa$.
\end{Prop}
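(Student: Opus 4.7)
The plan is to establish a combinatorial description of the test ideals $\tau(\aaa^{c})$ in terms of $\mu_{\aaa}$ and then read off the F-jumping coefficients inductively. The crux, from which the proposition follows quickly, is the following characterization:

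\emph{Key Lemma.} For every $u \in \sigma^{\vee} \cap M$ and every $c \ge 0$, $X^{u} \in \tau(\aaa^{c})$ if and only if $\mu_{\aaa}(u) > c$.

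To prove the Key Lemma I would match Blickle's criterion (Theorem \ref{Bthm3}) with the condition defining $\mu_{\aaa}(u)$. For the ``if'' direction, given $\omega \in \sigma^{\vee} \setminus \OO$ with $\lambda_{\aaa}(u+\omega) > c$, I would perturb $\omega$ by a small positive multiple of an interior vector of $\sigma^{\vee}$ to land strictly inside $c\PP(\aaa)$ in the topological sense of $M_{\RRR}$, while keeping $\langle \omega, v_{j} \rangle \le 1$, and then invoke Theorem \ref{Bthm3}. For the ``only if'' direction, Theorem \ref{Bthm3} provides $\omega \in M_{\RRR}$ with $\langle \omega, v_{j} \rangle \le 1$ and $u+\omega \in \Int(c \PP(\aaa))$; observing that $(1-\varepsilon)(u+\omega) \in c \PP(\aaa)$ for small $\varepsilon > 0$ gives $\lambda_{\aaa}(u+\omega) > c$, and Lemma \ref{dualandpoly} then lets me replace $\omega$ by an $\omega^{+} \in \sigma^{\vee}$ whose pairings $\langle \omega^{+}, v_{j} \rangle$ lie in $[\max(0, \langle \omega, v_{j} \rangle), 1)$, so that $\lambda_{\aaa}(u+\omega^{+}) \ge \lambda_{\aaa}(u+\omega) > c$ and $\omega^{+} \in \sigma^{\vee} \setminus \OO$ witnesses $\mu_{\aaa}(u) > c$.

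Given the Key Lemma, the proposition follows by induction on $i$. The base case $i=1$ is immediate: $\tau(\aaa^{\cc^{0}(\aaa)}) = \tau(\aaa^{0}) = R$ contains every $X^{u}$, so $\cc^{1}(\aaa) = \inf_{u \in \sigma^{\vee} \cap M} \mu_{\aaa}(u)$, and by the Key Lemma this is the largest $c$ for which $\tau(\aaa^{c}) = R$, i.e., the F-pure threshold, which is the first F-jumping coefficient by Corollary \ref{fpuretoric}. For the inductive step, assuming $\cc^{i-1}(\aaa)$ is the $(i-1)$th F-jumping coefficient, the Key Lemma identifies $\tau(\aaa^{\cc^{i-1}(\aaa)})$ as the monomial ideal generated by those $X^{u}$ with $\mu_{\aaa}(u) > \cc^{i-1}(\aaa)$. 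Hence
\[
\cc^{i}(\aaa) = \inf_{\mu_{\aaa}(u) > \cc^{i-1}(\aaa)} \mu_{\aaa}(u),
\]
which is exactly the smallest $c > \cc^{i-1}(\aaa)$ at which some monomial leaves $\tau(\aaa^{\cc^{i-1}(\aaa)})$, i.e., the next F-jumping coefficient.

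The main obstacle is the replacement $\omega \mapsto \omega^{+}$ in the ``only if'' direction of the Key Lemma, particularly when $\sigma$ is not simplicial: Blickle's criterion allows $\omega \in M_{\RRR}$ unconstrained beyond $\langle \omega, v_{j} \rangle \le 1$, and producing $\omega^{+} \in \sigma^{\vee}$ with $\langle \omega^{+}, v_{j} \rangle \in [\max(0, \langle \omega, v_{j} \rangle), 1)$ for every $j$ amounts to a nontrivial linear-programming feasibility problem when the $v_{j}$ are linearly dependent. A secondary point is that the infimum defining $\cc^{i}(\aaa)$ must be attained for it to be a genuine F-jumping coefficient; this follows from the discreteness of the set of values $\{\mu_{\aaa}(u) : u \in \sigma^{\vee} \cap M\}$, which in turn is a consequence of the finiteness of the monomial test ideals of $\aaa$.
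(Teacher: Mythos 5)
Your strategy matches the paper's: both hinge on converting Blickle's membership criterion (Theorem \ref{Bthm3}) into a statement about $\mu_{\aaa}$ and then reading off the jumping numbers inductively, using Lemma \ref{dualandpoly} to control monotonicity. Your ``Key Lemma'' ($X^{u}\in\tau(\aaa^{c})$ iff $\mu_{\aaa}(u)>c$) is precisely what the paper uses implicitly in both halves of its argument, so at the level of ideas the two proofs are the same.

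The obstacle you flag in the ``only if'' direction is, however, a genuine issue, and it is worth noting that the paper's own proof does not address it either: the paper passes directly from ``$\mathbf{b}_{j'}+\omega\notin\Int(\cc^{i}(\aaa)\PP(\aaa))$ for all $\omega\in\sigma^{\vee}\setminus\OO$'' to ``$X^{\mathbf{b}_{j'}}\notin\tau(\aaa^{\cc^{i}(\aaa)})$ by Theorem \ref{Bthm3},'' even though Blickle's criterion quantifies over all $\omega\in M_{\RRR}$ with $\langle\omega,v_{j}\rangle\le 1$, not merely over $\omega\in\sigma^{\vee}\setminus\OO$. To justify this step one needs exactly the replacement argument you describe: given $\omega\in M_{\RRR}$ with $\langle\omega,v_{j}\rangle\le 1$ and $u+\omega\in\Int(c\PP(\aaa))$, one must produce $\omega'\in\sigma^{\vee}$ with $\langle\omega',v_{j}\rangle<1$ and $u+\omega'\in\Int(c\PP(\aaa))$. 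You are right that this is immediate when $\sigma$ is simplicial (the map $x\mapsto(\langle x,v_{j}\rangle)_{j}$ is then surjective, so one may simply truncate negative coordinates), and that for non-simplicial $\sigma$ it amounts to a feasibility problem that neither you nor the paper resolves. (For the case $u=0$, used in Corollary \ref{fpuretoric}, the problem disappears because $\omega=u+\omega\in\Int(c\PP(\aaa))\subseteq\sigma^{\vee}$ automatically.) So your identification of the weak spot is accurate, and if anything you are more careful than the source.

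One smaller point: your appeal to ``discreteness of $\{\mu_{\aaa}(u)\}$, a consequence of the finiteness of monomial test ideals'' to show the infimum is attained risks circularity, since discreteness of F-jumping coefficients is essentially what one is trying to establish. The paper avoids this cleanly via Lemma \ref{dualandpoly}: for $u=\mathbf{b}_{j}+\omega$ with $\omega\in\sigma^{\vee}\cap M$ one has $\mu_{\aaa}(u)\ge\mu_{\aaa}(\mathbf{b}_{j})$, so the infimum defining $\cc^{i}(\aaa)$ is already achieved on the finitely many monomial generators $\mathbf{b}_{1},\ldots,\mathbf{b}_{t}$ of $\tau(\aaa^{\cc^{i-1}(\aaa)})$; no discreteness input is needed. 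You would do well to replace your attainment argument with this one.
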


\begin{proof}
We show that $\cc^i(\aaa)$ 
is a jumping number of the test ideal.
We assume that 
$$\tau(\aaa^{\cc^{i-1}(\aaa)})
=(X^{\mathbf{b}_1}, \cdots, X^{\mathbf{b}_t}).$$
By Lemma \ref{dualandpoly},
$$\cc^i(\aaa)= \inf_{j=1, \cdots, t} \mu_{\aaa}(\mathbf{b}_j).$$
Since $\{\mathbf{b}_j\}$ is a finite set,
there exists $j'$ such that 
$\cc^i(\aaa)=\mu_{\aaa}(\mathbf{b}_{j'})$.
By the definition of $\cc^i(\aaa)$,
for all $\omega \in \sigma^{\vee}\setminus\OO$,
$$\mathbf{b}_{j'}+\omega \notin
 \Int(c^i(\aaa)\PP(\aaa)).$$
This implies that $X^{\mathbf{b}_{j'}} \notin
 \tau(\aaa^{\cc^i(\aaa)})$ by Theorem \ref{Bthm3}.
On the other hand,
there exists $\omega' \in \sigma^{\vee}\setminus\OO$
such that
$$\mathbf{b}_{j'}+\omega' 
\in \Int((\cc^i(\aaa)-\varepsilon)\PP(\aaa)),$$
for all $\varepsilon >0$.
This also implies that $X^{\mathbf{b}_{j'}} \in
 \tau(\aaa^{\cc^i(\aaa)-\varepsilon}).$
Therefore
\mbox{$\tau(\aaa^{\cc^i(\aaa)}) \subsetneq
\tau(\aaa^{\cc^i(\aaa)-\varepsilon })$}
and hence $\cc^i(\aaa)$ is a jumping number.

We show that $\cc^i(\aaa)$ is the $i$-th F-jumping coefficient of
 $\aaa$.
In other words,
 \mbox{$\tau(\aaa^{\cc^i(\aaa)-\varepsilon})=\tau(\aaa^{\cc^{i-1}(\aaa)})$}
for all $\varepsilon >0$ with $\cc^{i-1}(\aaa) \le
 \cc^i(\aaa)-\varepsilon$.
The inclusion  $\tau(\aaa^{\cc^i(\aaa)-\varepsilon})
\subseteq \tau(\aaa^{\cc^{i-1}(\aaa)})$ follows immediately
from Theorem \ref{Bthm3}.
The opposite inclusion 
follows from the definition of $\cc^i(\aaa)$.
In fact, if $X^u \in \tau(\aaa^{\cc^{i-1}(\aaa)})$,
then \mbox{$\cc^i(\aaa)-\varepsilon < \cc^i(\aaa) \le \mu_{\aaa}(u)$,}
by definition of $\cc^i(\aaa)$.
Hence there exists $\omega \in \sigma^{\vee}\setminus \OO$
such that
$$u+\omega \in \Int((\cc^i(\aaa)-\varepsilon)\PP(\aaa)).$$
This implies that $X^u \in \tau(\aaa^{\cc^i(\aaa)-\varepsilon})$
by Theorem \ref{Bthm3}.
We complete the proof of the proposition.
\end{proof}

\begin{Prop}
We have the following inequality:
$$\cc^i(\aaa) \le 
\cc^{\tau(\aaa^{\cc^i(\aaa)})}(\aaa).$$
\end{Prop}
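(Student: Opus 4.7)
The plan is to combine the two supremum formulas already in hand. Let $c = \cc^i(\aaa)$ and $J = \tau(\aaa^{c})$. Theorem \ref{Thmfthre} gives $\cc^J(\aaa) = \sup_{u \in \sigma^{\vee} \setminus \QQ(J)} \lambda_{\aaa}(u)$, while the proof of Proposition \ref{fjumptoric} produces a generator $X^{\mathbf{b}_{j'}}$ of $\tau(\aaa^{\cc^{i-1}(\aaa)})$ with $X^{\mathbf{b}_{j'}} \notin J$ and
\[
c = \mu_{\aaa}(\mathbf{b}_{j'}) = \sup_{\omega \in \sigma^{\vee} \setminus \OO} \lambda_{\aaa}(\mathbf{b}_{j'} + \omega).
\]
Comparing these two supremums, it suffices to establish the set inclusion $\mathbf{b}_{j'} + (\sigma^{\vee} \setminus \OO) \subseteq \sigma^{\vee} \setminus \QQ(J)$; taking $\sup \lambda_{\aaa}$ over both sides will then yield $\cc^i(\aaa) \le \cc^J(\aaa)$.

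The key step is to show that $\mathbf{b}_{j'} + \omega \notin \QQ(J)$ for every $\omega \in \sigma^{\vee} \setminus \OO$. I would argue by contradiction: assume $\mathbf{b}_{j'} + \omega = u_0 + v_0$ for some $X^{u_0} \in J$ and $v_0 \in \sigma^{\vee}$. For each primitive generator $v_j$ of $\sigma$, using $\langle v_0, v_j \rangle \ge 0$ (since $v_0 \in \sigma^{\vee}$) and $\langle \omega, v_j \rangle < 1$ (since $\omega \notin \OO$), I compute
\[
\langle u_0, v_j \rangle = \langle \mathbf{b}_{j'}, v_j \rangle + \langle \omega, v_j \rangle - \langle v_0, v_j \rangle < \langle \mathbf{b}_{j'}, v_j \rangle + 1.
\]

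The crucial observation closing the argument is an integrality step. Since $u_0, \mathbf{b}_{j'} \in M$ and $v_j \in N$, both $\langle u_0, v_j \rangle$ and $\langle \mathbf{b}_{j'}, v_j \rangle$ lie in $\ZZZ$, so the strict real inequality sharpens to $\langle u_0, v_j \rangle \le \langle \mathbf{b}_{j'}, v_j \rangle$ for every $j$. Hence $\mathbf{b}_{j'} - u_0 \in \sigma^{\vee} \cap M$, so $X^{\mathbf{b}_{j'} - u_0} \in R$, and $X^{\mathbf{b}_{j'}} = X^{u_0} \cdot X^{\mathbf{b}_{j'} - u_0} \in J$, contradicting the choice of $\mathbf{b}_{j'}$. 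I expect this integrality passage to be the main delicate point: without it one is stuck with the naive bound $\langle u_0 - \mathbf{b}_{j'}, v_j \rangle < 1$, which in general does not give the $\le 0$ needed to place $\mathbf{b}_{j'} - u_0$ in $\sigma^{\vee}$.
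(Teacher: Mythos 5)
Your proposal is correct and follows essentially the same route as the paper: both reduce the claim to the set inclusion $u + (\sigma^{\vee}\setminus\OO) \subseteq \sigma^{\vee}\setminus\QQ(\tau(\aaa^{\cc^i(\aaa)}))$ for a suitable lattice point $u$ outside $\tau(\aaa^{\cc^i(\aaa)})$, and both close the contradiction by the same integrality observation that $\langle u - u_0, v_j\rangle \in \ZZZ$ forces the strict bound $|\langle u - u_0, v_j\rangle| < 1$ into $\langle u - u_0, v_j\rangle \ge 0$. The only cosmetic difference is that the paper takes a generic witness $u$ with $\cc^i(\aaa)\le\mu_{\aaa}(u)$ and derives the contradiction from a single bad index $j$, whereas you fix the minimizing generator $\mathbf{b}_{j'}$ with equality and show directly that $\mathbf{b}_{j'}-u_0 \in \sigma^{\vee}\cap M$.
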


\begin{proof}
Since $\tau(\aaa^{\cc^i(\aaa)}) \subsetneq 
\tau(\aaa^{\cc^{i-1}(\aaa)})$,
there exists $u \in \sigma^{\vee} \cap M$ such that
\mbox{$X^u \in \tau(\aaa^{\cc^{i-1}(\aaa)})$}
and $X^u \notin \tau(\aaa^{\cc^i(\aaa)})$.
By Proposition \ref{fjumptoric},
\begin{equation}\label{**}
\cc^i(\aaa) \le \mu_{\aaa}(u).
\end{equation}
We claim  that for all $\omega \in \sigma^{\vee}\setminus \OO$,
\begin{equation*}
\omega+u \in \sigma^{\vee} \setminus \QQ(\tau(\aaa^
{\cc^i(\aaa)})).
\end{equation*}
By Theorem \ref{Thmfthre}, this claim implies that
\begin{equation}\label{*4}
\mu_{\aaa}(u) \le \cc^{\tau(\aaa^{\cc^
i(\aaa)})}(\aaa).
\end{equation}
The proof of the proposition is completed from inequalities
 (\ref{**}) and (\ref{*4}).
Now we prove that claim.
We assume that there exists $\omega \in \sigma^{\vee}\setminus \OO$
 such that $u+\omega \in \QQ(\tau(\aaa^
{\cc^i(\aaa)}))$.
There exist $u' \in M$ and $\omega' \in \sigma^{\vee}$
such that $X^{u'} \in \tau(\aaa^{\cc^i (\aaa)})$ 
and $u+\omega=u'+\omega'$.
Thus $u-u'=\omega'-\omega \in M$.
On the other hand, since $u=u'+\omega'-\omega \in M$
and $X^u \notin \tau(\aaa^{\cc^i(\aaa)})$,
we have $\omega'-\omega \notin \sigma^{\vee}$.
That is, there exists $j$ such that $\langle (\omega'-\omega),
 v_j \rangle < 0$.
Therefore
$$0 \le \langle \omega', v_j \rangle <
\langle \omega, v_j \rangle < 1.$$
It contradicts that $\omega'-\omega \in M$.
Hence we have the claim, 
and then we complete the proof of the proposition.
\end{proof}

\begin{Rem}
Since a toric ring is strongly F-regular,
$\aaa \subseteq 
\tau(\aaa^{\cc^i(\aaa)})$.
Hence $\cc^{\tau(\aaa^{\cc^
i(\aaa)})}(\aaa)$ exists and is a finite number.
\end{Rem}

\section{Applications}
Let us give some applications of results in previous sections.
As we see in Corollary \ref{threisjump},
for an arbitrary ideal $\aaa$,
F-thresholds of $\aaa$ are equal to 
\mbox{F-jumping} coefficients of $\aaa$ on regular rings.
By the formula of F-thresholds,
we see that
if $R$ is a toric ring
 which has at most Gorenstein
singularities,
then there exists a monomial ideal $\aaa \subseteq R$
such that \mbox{$\cc(\aaa)=\cc^{\mmm}(\aaa)$.}

\begin{Prop}\label{comparigoren}
Let $R$ be a Gorenstein toric ring defined by
a cone \mbox{$\sigma \subseteq N_{\RRR}=\RRR^d$} and 
$\mmm$ the maximal monomial ideal.
There exist a monomial ideal $\aaa \subseteq R$
such that $\cc(\aaa)=\cc^{\mmm}(\aaa)$.
\end{Prop}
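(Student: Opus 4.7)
The plan is to exploit the Gorenstein hypothesis to single out one lattice point whose associated principal ideal already witnesses the equality. Writing $\sigma = \RRR_{\ge 0}v_1 + \cdots + \RRR_{\ge 0}v_n$ with primitive $v_j \in N$, the classical combinatorial characterization of Gorenstein affine toric rings provides a lattice element $u_0 \in M$ with $\langle u_0, v_j \rangle = 1$ for every $j$. I take $\aaa := (X^{u_0})$; since $u_0 \ne 0$ lies in $\sigma^{\vee} \cap M$, this is a proper monomial ideal contained in $\mmm$.

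With this choice, Proposition \ref{pro_of_PQ}\,(iii) gives $\PP(\aaa) = u_0 + \sigma^{\vee}$, so $\lambda \PP(\aaa) = \lambda u_0 + \sigma^{\vee}$. Consequently $u \in \lambda \PP(\aaa)$ is equivalent to $\langle u, v_j \rangle \ge \lambda$ for every $j$, which gives the clean formula
$$\lambda_{\aaa}(u) \;=\; \min_{1 \le j \le n} \langle u, v_j \rangle \qquad \text{for } u \in \sigma^{\vee}.$$

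The rest is a direct application of the two formulas already established. Corollary \ref{fpuretoric} gives $\cc(\aaa) = \sup\{\min_j \langle u, v_j \rangle \mid u \in \sigma^{\vee},\ \max_j \langle u, v_j \rangle < 1\}$; this supremum is trivially at most $1$, and taking $u = (1-\varepsilon)u_0$ shows it is also at least $1$, so $\cc(\aaa) = 1$. Theorem \ref{Thmfthre} applied with $J = \mmm$ gives $\cc^{\mmm}(\aaa) = \sup_{u \in \sigma^{\vee} \setminus \QQ(\mmm)} \min_j \langle u, v_j \rangle$. One direction, $\cc^{\mmm}(\aaa) \ge \cc(\aaa) = 1$, is Proposition \ref{fpurelefthre}. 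For the reverse, if $u \in \sigma^{\vee}$ satisfies $\min_j \langle u, v_j \rangle \ge 1$, then $\langle u - u_0, v_j \rangle \ge 0$ for every $j$, so $u \in u_0 + \sigma^{\vee} \subseteq \QQ(\mmm)$ (using $X^{u_0} \in \mmm$). Contrapositively, every $u \in \sigma^{\vee} \setminus \QQ(\mmm)$ satisfies $\min_j \langle u, v_j \rangle < 1$, forcing $\cc^{\mmm}(\aaa) \le 1$. Combining the two bounds yields $\cc(\aaa) = \cc^{\mmm}(\aaa) = 1$.

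The only non-computational input is the existence of the lattice point $u_0$ with $\langle u_0, v_j \rangle = 1$ for all $j$, which is the standard combinatorial translation of the Gorenstein property for an affine toric ring and is the sole place the hypothesis is used; once it is in hand, everything else is a transparent calculation with the two formulas, so I do not anticipate a genuine obstacle.
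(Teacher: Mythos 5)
Your proposal is correct and follows essentially the same route as the paper: pick the Gorenstein lattice point $u_0$ with $\langle u_0,v_j\rangle=1$ for all $j$, take $\aaa=(X^{u_0})$, and use Corollary \ref{fpuretoric}, Theorem \ref{Thmfthre}, and Proposition \ref{fpurelefthre} to show both invariants equal $1$. The only cosmetic difference is that you compute $\lambda_{\aaa}(u)=\min_j\langle u,v_j\rangle$ explicitly and argue the bound $\cc^{\mmm}(\aaa)\le 1$ by contraposition, whereas the paper phrases the same fact as $\PP(\aaa)\subseteq\QQ(\mmm)$.
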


\begin{proof}
We assume that $\sigma =
 \RRR_{\ge 0}v_1 + \cdots + \RRR_{\ge 0}v_n$,
where $v_j \in N$ are primitive numbers. 
For a Gorenstein toric ring $R$,
there exists an element $\omega \in \sigma^{\vee}\cap M$ such that
$\langle \omega, v_j \rangle =1$ for all $j=1, \cdots, n$.
By Lemma \ref{dualandpoly}, for a monomial ideal $\aaa \subseteq R$,
we can describe
$$\cc(\aaa) = \lambda_{\aaa}(\omega).$$ 
Let $\aaa=(X^{\omega})$.
We have $\PP(\aaa)=\omega + \sigma^{\vee}$,
and clearly $\cc(\aaa)=\lambda_{\aaa}(\omega)=1$.
Since $\omega \in M \setminus \mathbf{0}$, we have $\omega \in
 \QQ(\mmm)$.
Hence $\PP(\aaa) \subseteq \QQ(\mmm)$.
By Theorem \ref{Thmfthre},
that implies $\cc^{\mmm}(\aaa) \le 1=\cc(\aaa)$.
On the other hand,
$\cc(\aaa)\le\cc^{\mmm}(\aaa)$ follows by Proposition \ref{fpurelefthre}.
We complete the proof of the proposition.
\end{proof}

For $2$-dimensional toric rings,
we see that the opposite assertion of Proposition \ref{comparigoren}
is true.
However, it is false in general toric rings whose dimension
 are greater than $3$.

\begin{Prop}
Let $R$ be a $2$-dimensional toric ring, 
and $\mmm$ the maximal monomial ideal of $R$.
If there exists a monomial ideal $\aaa \subseteq R$
such that $\cc(\aaa)=\cc^{\mmm}(\aaa)$,
then $R$ has at most Gorenstein singularities.
\end{Prop}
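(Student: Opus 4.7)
The plan is to prove the contrapositive: if $R$ is a $2$-dimensional toric ring that is not Gorenstein, then $\cc(\aaa) < \cc^{\mmm}(\aaa)$ for every proper nonzero monomial ideal $\aaa$. Write $\sigma=\RRR_{\ge 0}v_1+\RRR_{\ge 0}v_2$ with $v_j$ primitive in $N$, and let $w_1,w_2\in M$ be the primitive generators of $\sigma^\vee$ numbered so that $\langle w_i,v_j\rangle=0$ for $i\ne j$. A direct computation in $\RRR^2$ shows $\langle w_1,v_1\rangle=\langle w_2,v_2\rangle=n$, where $n=|\det(v_1,v_2)|$; set $\omega_0:=(w_1+w_2)/n$, so that $\langle\omega_0,v_j\rangle=1$ for both $j$. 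The Gorenstein condition is exactly that $\omega_0\in M$, so the non-Gorenstein hypothesis forces $\omega_0\notin M$ and $n\ge 2$.

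The key combinatorial step is to show $\omega_0\notin\QQ(\mmm)$. Suppose otherwise: $\omega_0=u'+w$ with $0\ne u'\in\sigma^\vee\cap M$ and $w\in\sigma^\vee$. Pairing with $v_j$ and using $w\in\sigma^\vee$, each $\langle u',v_j\rangle$ is a nonnegative integer bounded above by $\langle\omega_0,v_j\rangle=1$, hence lies in $\{0,1\}$. Both equal to $1$ would give $u'=\omega_0\notin M$; both equal to $0$ would give $u'=0$; and if exactly one equals $1$, then $u'=w_j/n$ for some $j$, which cannot lie in $M$ because $w_j$ is primitive and $n\ge 2$. So no such $u'$ exists.

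Because $\QQ(\mmm)$ is a finite union of translated closed cones (indexed by the Hilbert basis of $\sigma^\vee\cap M$), it is closed, so I can choose $r>1$ sufficiently close to $1$ with $u^*:=r\omega_0\in\sigma^\vee\setminus\QQ(\mmm)$. On the other side, every $u\in\sigma^\vee\setminus\OO$ satisfies $\langle u,v_j\rangle<1=\langle\omega_0,v_j\rangle$ for both $j$, so Lemma \ref{dualandpoly} gives $\lambda_{\aaa}(u)\le\lambda_{\aaa}(\omega_0)$, and Corollary \ref{fpuretoric} yields $\cc(\aaa)\le\lambda_{\aaa}(\omega_0)$. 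Since $\omega_0$ lies in the interior of $\sigma^\vee$, one has $\omega_0-\lambda\mathbf{a}_i\in\sigma^\vee$ for small $\lambda>0$ and any generator $X^{\mathbf{a}_i}$ of $\aaa$, so $\lambda_{\aaa}(\omega_0)>0$. Combining these with Theorem \ref{Thmfthre} and the positive $1$-homogeneity $\lambda_{\aaa}(ru)=r\lambda_{\aaa}(u)$ gives
\[
\cc^{\mmm}(\aaa)\ge\lambda_{\aaa}(u^*)=r\lambda_{\aaa}(\omega_0)>\lambda_{\aaa}(\omega_0)\ge\cc(\aaa),
\]
which is the desired strict inequality. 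The main obstacle is the combinatorial step $\omega_0\notin\QQ(\mmm)$: it relies on the $2$-dimensional coincidence $\langle w_1,v_1\rangle=\langle w_2,v_2\rangle=n$ together with primitivity of the $w_j$, and both of these properties fail in higher dimension, which is precisely why the converse is claimed only in dimension $2$.
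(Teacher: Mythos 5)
Your proof is correct and follows essentially the same line as the paper's: both hinge on showing that the unique point $\omega_0\in M_{\QQQ}$ with $\langle\omega_0,v_j\rangle=1$ lies outside $\QQ(\mmm)$ when $R$ is not Gorenstein, and then combine Corollary~\ref{fpuretoric}, Lemma~\ref{dualandpoly}, and Theorem~\ref{Thmfthre} with the closedness of $\QQ(\mmm)$ to push a slightly scaled-up $\omega_0$ into $\sigma^\vee\setminus\QQ(\mmm)$ and force a strict inequality. The difference is in how $\omega_0\notin\QQ(\mmm)$ is established: the paper fixes explicit coordinates $v_1=(1,0)$, $v_2=(a,b)$, splits into the cases $a=0$, $a=1$, $a>1$, $a<0$, and for the nontrivial case reasons through a decomposition $u=\lambda_1(0,1)+\lambda_2(b,-a)$, integrality of $\lambda_2 b$, and a divisibility argument using $\gcd(a,b)=1$; you instead argue coordinate-free by observing that in dimension two both diagonal pairings $\langle w_i,v_i\rangle$ equal the index $n=|\det(v_1,v_2)|$, so $\langle u',v_j\rangle\in\{0,1\}$ collapses to three cases, and the only potentially nonzero solutions $u'=w_j/n$ fail to be lattice points because the $w_j$ are primitive and $n\ge 2$. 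Your version also dispenses with the paper's separate treatment of the trivial (automatically Gorenstein) subcases. The logical content is identical (yours is the contrapositive of the paper's final implication $\omega\in\QQ(\mmm)\Rightarrow\omega\in M$), but your exposition is noticeably leaner and makes the two-dimensional coincidence that drives the argument explicit.
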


\begin{proof}
Suppose that $R$ is defined by a cone $\sigma$.
By taking a suitable change of coordinates,
it suffices to consider cones $\sigma:=\RRR_{\ge 0}(1,0)+\RRR_{\ge 0}(a,b)
 \subseteq \RRR^2$, where $b >0$ and the greatest common divisor of $a$
 and $b$ is $1$.
The following three cases are trivial.
If $a=0$,
then $R$ is the polynomial ring.
If $a=1, b=1$, then $R=k[X_1,\ X_1^{-1}X_2]$,
which is a regular ring.
If $a=1$, $b>1$,
then $R=k[X_1, X_2, X_1^{b}X_2^{-1}]\cong k[x,y,z]/(xz-y^{b})$.
Note that ${\rm{Spec}} R$ has an ${\sc{A}}_{b-1}$ singularity.
Hence $R$ is a Gorenstein ring.
Assume that $a>1$.
We have $\sigma^{\vee}=\RRR_{\ge 0}(0,1)+\RRR_{\ge 0}(b,-a)$,
and the point $\omega=(1,(1-a)/b)$ which satisfies
$$\langle \omega, (1,0)\rangle=\langle \omega, (a,b)\rangle=1.$$
If $\omega \notin \QQ(\mmm)$,
then for all monomial ideals $\aaa$, we have
$\cc(\aaa)<\cc^{\mmm}(\aaa)$.
In fact, by taking $\varepsilon >0$ 
with $(1+\varepsilon)\omega \notin \QQ(\mmm)$,
we have a strict inequality;
$$\cc(\aaa)<\lambda_{\aaa}((1+\varepsilon))\le \cc^{\mmm}(\aaa).$$
By the assumption of the proposition,
$\omega \in \QQ(\mmm)$.
Thus it is enough to prove that
$\omega \in M$ under the assumption $\omega \in \QQ(\mmm)$.
By the definition of $\QQ(\mmm)$,
if $\omega \in \QQ(\mmm)$,
then there exists a lattice point 
\mbox{$u \in \sigma^{\vee} \cap M \setminus \{\mathbf{0}\}$}
such that $\omega - u \in \sigma^{\vee}$.
Since $u \in \sigma^{\vee}$,
the lattice point $u$ is written as
$u=\lambda_1(0,1)+\lambda_2(b,-a) \in M$,
where $\lambda_1$ and $\lambda_2$ are positive.
Since $\omega - u \in \sigma^{\vee}$,
we have $(1/b)-\lambda_1 \ge 0$ and $(1/b)-\lambda_2 \ge 0$.
Since $\mathbf{0} \neq u \in M$ and $b \in \ZZZ_{>0}$,
we have $\lambda_2=1/b$.
Hence $u=(1, \lambda_1 -(a/b))$.
Since $u \in M$,
there exists an integer $l$
such that $l=\lambda_1-(a/b)$ and 
$$-\frac{a}{b} \le l \le \frac{1-a}{b}.$$
Since $a,\ b \in \ZZZ$ and the greatest common divisor of
$a$ and $b$ is $1$,
we have $bl=1-a$.
Thus $b|(1-a)$.
This implies that $\omega \in M$.
The remaining case when $a<0$ follows by the same argument.
We complete the proof of the proposition.
\end{proof}

\begin{Ex}\label{Exr-goren}
Suppose $N=\RRR^3$.
We define generators $\{v_i\}$ of a cone $\sigma \subseteq N_{\RRR}$
as
$$v_1:=(1,0,0),\
v_2:=(1,1,0),\
v_3:=(0,1,r).$$
Let $\omega:=(1,0,1/r) \in \sigma^{\vee}$.
Since $\langle \omega,\ v_i \rangle=1$ for all $i$,
the toric ring $R$ defined by $\sigma$ has an $r$-Gorenstein singularity.
We choose a set of generators of $\sigma^{\vee}$ as
$$u_1:=(r,-r,1),\
u_2:=(0,r,-1),\
u_3:=(0,0,1).$$
Then
$$\omega=\frac{1}{r}u_1+\frac{1}{r}u_2+\frac{1}{r}u_3.$$
Since $\omega-(1/r)u_3 \in \sigma^{\vee}\cap M$,
we have $\omega\in \QQ(\mmm)$,
where $\mmm \subseteq R$ is the maximal monomial ideal.
Let $\aaa$ be a monomial ideal generated by $X^{r\omega}$.
Then $(1/r)\PP(\aaa)=\omega+\sigma^{\vee} \subseteq \QQ(\mmm)$.
The same argument in the proof of Proposition \ref{comparigoren}
implies $\cc(\aaa)=\cc^{\mmm}(\aaa)=1/r$.
\end{Ex}

\begin{Ex}
Suppose $N=\RRR^d$, where $d> 3$.
We consider the cone $\sigma$ generated by
\begin{eqnarray*}
v_1:=(1,0,0,0,&\cdots &,0)\\
v_2:=(1,1,0,0,&\cdots &,0)\\
v_3:=(0,1,r,0,&\cdots &,0)\\
v_i:=(0,0,0,0,&\cdots,0,\stackrel{i}{\breve{1}},0,\cdots &,0),\ 3<i\le d.
\end{eqnarray*}
By the same argument in Example \ref{Exr-goren},
we have a monomial ideal $\aaa$ of a $d$-dimensional 
$r$-Gorenstein ring $R=k[\sigma^{\vee}\cap M]$
such that $\cc(\aaa)=\cc^{\mmm}(\aaa)$.
\end{Ex}

Using F-thresholds and F-pure thresholds,
we give a criterion of regularities for a toric ring defined 
by a simplicial cone.

\begin{Thm}\label{charareg}
Let $R$ be a toric ring defined by 
a simplicial cone $\sigma$,
and $\mmm$ the maximal monomial ideal.
If there exists a monomial ideal $\aaa$ such that
$\sqrt{\aaa} = \mmm$ and 
$$\cc(\aaa) = \cc^{\mmm}(\aaa),$$
then $R$ is a regular ring.

\end{Thm}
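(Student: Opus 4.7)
The plan is to prove the contrapositive: assuming $R$ is simplicial but not regular, I would show that for every monomial ideal $\aaa$ with $\sqrt{\aaa} = \mmm$ one has $\cc(\aaa) < \cc^{\mmm}(\aaa)$, contradicting the hypothesis.

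Write $\sigma = \RRR_{\ge 0}v_1 + \cdots + \RRR_{\ge 0}v_d$ with $v_j \in N$ primitive and $\RRR$-linearly independent, and let $u_1,\ldots,u_d \in M_{\RRR}$ be the dual basis $\langle u_i, v_j\rangle = \delta_{ij}$, so $\sigma^{\vee} = \RRR_{\ge 0}u_1 + \cdots + \RRR_{\ge 0}u_d$. Non-regularity means some $u_{j_0}\notin M$, and since $u_{j_0}$ is rational there is a smallest integer $m\ge 2$ with $m u_{j_0}\in M$; in fact $M\cap\RRR u_{j_0} = \ZZZ\cdot m u_{j_0}$. For $u = \sum c_i u_i \in\sigma^{\vee}$ we have $\langle u,v_j\rangle = c_j$, so $\sigma^{\vee}\setminus\OO$ is the half-open parallelotope $\{0\le c_i < 1\}$ with ``top vertex'' $\omega_0 := u_1+\cdots+u_d$. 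Lemma \ref{dualandpoly} bounds $\lambda_{\aaa}$ on this parallelotope by $\lambda_{\aaa}(\omega_0)$, and positive homogeneity $\lambda_{\aaa}((1-\varepsilon)\omega_0) = (1-\varepsilon)\lambda_{\aaa}(\omega_0)$ then shows, via Corollary \ref{fpuretoric}, that $\cc(\aaa) = \lambda_{\aaa}(\omega_0)$.

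The comparison step considers, for small $\varepsilon > 0$,
$$u_{\varepsilon} := (m-\varepsilon)\,u_{j_0} + (1-\varepsilon)\sum_{i\ne j_0} u_i = (1-\varepsilon)\omega_0 + (m-1)u_{j_0}.$$
I claim $u_{\varepsilon}\in\sigma^{\vee}\setminus\QQ(\mmm)$. If some $v = \sum d_i u_i \in M\cap\sigma^{\vee}\setminus\{0\}$ satisfied $u_{\varepsilon}-v\in\sigma^{\vee}$, then (since $\{u_i\}$ is a basis of $\sigma^{\vee}$) $0\le d_i\le c_i$ coordinate-wise, and $v\in M$ forces $d_i = \langle v,v_i\rangle\in\ZZZ$; this yields $d_i = 0$ for $i\ne j_0$ and $d_{j_0}\in\{0,1,\ldots,m-1\}$, so $v = d_{j_0}u_{j_0}$. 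But $d_{j_0}u_{j_0}\in M$ requires $m\mid d_{j_0}$, forcing $v=0$, a contradiction. Theorem \ref{Thmfthre} then yields $\cc^{\mmm}(\aaa) \ge \lambda_{\aaa}(u_{\varepsilon})$.

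Superadditivity of $\lambda_{\aaa}$ (immediate from convexity of $\PP(\aaa)$) together with positive homogeneity gives
$$\lambda_{\aaa}(u_{\varepsilon}) \ge (1-\varepsilon)\lambda_{\aaa}(\omega_0) + (m-1)\lambda_{\aaa}(u_{j_0}),$$
and the hypothesis $\sqrt{\aaa} = \mmm$ forces $\lambda_{\aaa}(u_{j_0}) > 0$: if $\mmm^N\subseteq\aaa$ then $X^{Nm u_{j_0}}\in\aaa$, so $\lambda_{\aaa}(u_{j_0}) \ge 1/(Nm)$. Letting $\varepsilon\to 0^+$ and using $m-1\ge 1$ produces $\cc^{\mmm}(\aaa) \ge \lambda_{\aaa}(\omega_0) + (m-1)\lambda_{\aaa}(u_{j_0}) > \cc(\aaa)$, the desired contradiction. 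The only delicate step is the lattice-avoidance claim above, which rests on the dual-basis identity $d_i = \langle v,v_i\rangle$ forcing integer coordinates on $M$-points, together with the cyclic description of $M\cap\RRR u_{j_0}$; the remaining ingredients are routine consequences of convexity of $\PP(\aaa)$.
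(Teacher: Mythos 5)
Your proof is correct, and it takes a route that is dual to, though clearly related to, the paper's. Both begin the same way: since $\sigma$ is simplicial, $R$ is $\QQQ$-Gorenstein, and one shows via Corollary \ref{fpuretoric} and Lemma \ref{dualandpoly} that $\cc(\aaa)=\lambda_\aaa(\omega_0)$, where $\omega_0$ is the unique vector with $\langle\omega_0,v_j\rangle=1$ for all $j$. The paper then parametrizes the rays of $\sigma^{\vee}$ by \emph{primitive lattice} generators $\tilde u_i$ with $\langle\tilde u_i,v_j\rangle=l_i\delta_{ij}$, uses the hypothesis $\cc(\aaa)=\cc^{\mmm}(\aaa)$ together with Theorem \ref{Thmfthre} to assert $\lambda_\aaa(\omega_0)\PP(\aaa)\subseteq\QQ(\mmm)$, pushes a suitable convex combination of $\omega_0$ and a large multiple of $\tilde u_i$ into $\QQ(\mmm)$, and finally contradicts the primitiveness of $\tilde u_i$. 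You instead prove the contrapositive directly: working with the $\RRR$-dual basis $u_i$ (so your $m$ is the paper's $l_{j_0}$), you exhibit the explicit point $u_\varepsilon=(1-\varepsilon)\omega_0+(m-1)u_{j_0}$, verify by a short coordinate-wise lattice computation that $u_\varepsilon\notin\QQ(\mmm)$, and use superadditivity of $\lambda_\aaa$ together with $\sqrt{\aaa}=\mmm$ (which forces $\lambda_\aaa(u_{j_0})>0$) to obtain the quantitative strict inequality $\cc^{\mmm}(\aaa)\ge\lambda_\aaa(\omega_0)+(m-1)\lambda_\aaa(u_{j_0})>\cc(\aaa)$ directly from Theorem \ref{Thmfthre}. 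The two arguments exploit the same lattice-avoidance phenomenon in the fundamental box in the non-regular direction, but your witness-point version sidesteps the paper's intermediate inclusion $\lambda_\aaa(\omega_0)\PP(\aaa)\subseteq\QQ(\mmm)$, which is stated without elaboration and really uses closedness of $\QQ(\mmm)$, and it delivers an explicit lower bound for the gap $\cc^{\mmm}(\aaa)-\cc(\aaa)$ rather than a bare contradiction.
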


\begin{proof}
Let $\sigma \subseteq N_{\RRR}:= \RRR^d$.
Since $\sigma$ is simplicial,
we may assume that
$$\sigma=\RRR_{\ge 0}v_1 + \cdots \RRR_{\ge 0}v_d,$$
where $v_j \in N$ and $\{v_1,\cdots,v_d\}$ 
are $\RRR$-linearly independent.
Hence there exist $u_i \in M$ and $l_i \in \ZZZ_{>0}$
such that 
$$\sigma^{\vee}=\RRR_{\ge 0}u_1 + \cdots + \RRR_{\ge
 0}u_d,$$
and $\langle u_i, v_j \rangle = l_i \delta_{ij}$.
Moreover, for all $i, j =1, \cdots , d$,
we assume $v_j$ and $u_i$ are primitive.
Since $\sigma$ is simplicial, $R$ is $\QQQ$-Gorenstein.
Hence there exists $\omega \in M \otimes \QQQ$ such that
$$\cc(\aaa)=\cc^{\mmm}(\aaa)
=\lambda_{\aaa}(\omega).$$
By Theorem \ref{Thmfthre},
\begin{equation}\label{cri}
\lambda_{\aaa}(\omega)\PP(\aaa) 
\subseteq \QQ(\mmm).
\end{equation}
To prove the theorem,
it is enough to show that $l_i=1$ for every $i=1,\cdots,d$.
We derive a contradiction assuming $l_i > 1$ for some $i$.
Since $\sqrt{\aaa} = \mmm$,
for a sufficiently large nonnegative integer $l$,
we have $X^{lu_i} \in \aaa$.
In particular,
$\lambda_{\aaa}(\omega)lu_i \in \lambda_{\aaa}(\omega)\PP(\aaa)$.
If we choose sufficiently large $l$,
then we have
$$0 < \frac{l_i - 1}{\lambda_{\aaa}(\omega) l l_i - 1} < 1.$$
Let $\alpha \in \RRR_{>0}$ such that
$0 < \alpha < (l_i - 1)/(\lambda_{\aaa}(\omega) l l_i - 1)$.
By the definition of $\PP(\aaa)$ and (\ref{cri}),
$$\alpha \lambda_{\aaa}(\omega) l u_i + (1 - \alpha) \omega 
\in \QQ(\mmm).$$
On the other hand, for all $j$,
\begin{eqnarray*}
\langle \alpha \lambda_{\aaa}(\omega) l u_i + (1 - \alpha) \omega, v_j \rangle =
\left\{\begin{array}{ll}
1 - \alpha < 1
& (j \neq i), \\
\alpha \lambda_{\aaa}(\omega) l l_i + 1 - \alpha  < l_i
& (j=i).
\end{array} \right.
\end{eqnarray*}
By the definition of $\QQ(\mmm)$,
there exist $l'_i \in \ZZZ_{> 0}$,
$u \in M \cap \QQ(\mmm)$ and $u' \in \sigma^{\vee}$
such that 
\begin{eqnarray*}
\langle u, v_j \rangle =
\left\{\begin{array}{ll}
0 & (j \neq i) \\
l_i' < l_i
& (j=i),
\end{array} \right.
\end{eqnarray*}
and 
$$\alpha \lambda_{\aaa}(\omega) l u_i + (1 - \alpha)\omega = u + u'.$$
However,
the existence of $u$ contradicts the primitiveness of $u_i$.
Thus $l_i = 1$.
Eventually, for every $i= 1,\cdots, d$, we have $l_i=1$.
Therefore we complete the proof of the theorem.
\end{proof}

On the other hand, there exist a toric $R$
defined by a non-simplicial cone
and a maximal ideal $\mmm$
such that $\cc(\mmm)=
\cc^{\mmm}(\mmm)$.

\begin{Ex}[{\cite[Remark 2.5]{HMTW}}]
If $R=k[X_1X_3,X_2X_3,X_3,X_1X_2X_3]$
and $\mmm=(X_1X_3,X_2X_3,X_3,X_1X_2X_3)$,
then $R$ is a toric ring whose defining cone is 
$$\sigma=
\RRR_{\ge 0}(1,0,0)+
\RRR_{\ge 0}(0,1,0)+
\RRR_{\ge 0}(-1,0,1)+
\RRR_{\ge 0}(0,-1,1).
$$
There exists $\omega=(1,1,2) \in \sigma^{\vee}$
which entails
$$
\langle \omega, (1,0,0)\rangle=
\langle \omega, (0,1,0)\rangle=
\langle \omega, (-1,0,1)\rangle=
\langle \omega, (0,-1,1)\rangle=1.
$$
By Corollary \ref{fpuretoric} and Lemma \ref{dualandpoly},
for every monomial ideal $\aaa$,
we have \mbox{$\cc(\aaa)=\lambda_{\aaa}(\omega)$.}
Hence $\cc(\mmm)=2$.
On the other hand, we can compute 
\mbox{$\cc^{\mmm}(\mmm)=2$.}
\end{Ex}

Finally, we discuss about the rationality of F-thresholds.
This was given as an open problem in \cite{MTW}.
For some regular rings,
Blickle, Musta\c{t}\v{a} and Smith give the affirmative answer. 
In \cite{BMS2}, they prove the rationality of
F-thresholds of all proper ideals $\aaa$ 
with respect to ideals $J$ which entail \mbox{$\aaa \subseteq \sqrt{J}$}
on an F-finite regular ring essentially of finite type over $k$
(\cite[Theorem 3.1]{BMS2}).
In addition, they also prove in cases that
$\aaa=(f)$ is principal on an \mbox{F-finite} regular ring
(\cite[Theorem 1.2]{BMS1}).
On the other hand,
Katzman, Lyubeznik and Zhang
prove in cases that $\aaa = (f)$ is principal
on an excellent regular local ring,
that is not necessarily F-finite
(\cite{KLZ}). 
We will prove rationality of an F-threshold of a monomial ideal $\aaa$ 
with respect to an $\mmm$-primary monomial ideal $J$ on a toric ring.
This argument is described in terms of real affine geometries.
We define the affine half space $\HH(v; \lambda)$ as
$$\HH(v;\lambda):=\{u \in M_{\RRR}|
\langle u, v \rangle \ge \lambda \},$$
where  $v \in N_{\RRR}$ and $\lambda \in \RRR$.
We also define the hyperplane $\partial \HH(v; \lambda)$ as
$$\partial \HH(v; \lambda):= \{u \in M_{\RRR}|
\langle u, v \rangle = \lambda\}.$$
Assume that $\aaa$ is a monomial ideal of a toric ring.
Since $\PP(\aaa)$ is a convex polyhedral set,
it is written as an intersection of finite affine half spaces.
we observe the form of $\PP(\aaa)$.
 
\begin{Lem}\label{polygon}
Let $R$ be a toric ring defined by a cone 
$\sigma \subseteq N_{\RRR}=\RRR^d$,
and $\aaa$ a monomial ideal of $R$.
Then there exist $v'_l \in N_{\QQQ}:=N\otimes \QQQ$ and
$\lambda'_l \in \QQQ$ for $l=1, \cdots, t$
such that
$\PP(\aaa)= \cap_{l=1}^{t}\HH
(v'_l;\lambda'_l)$.
\end{Lem}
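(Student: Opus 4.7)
The plan is to reduce the statement to standard facts about rational polyhedra, using Proposition \ref{pro_of_PQ} (iii) as the main stepping stone.

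First I would invoke the fact that $\aaa$, being a monomial ideal of the Noetherian ring $R$, is finitely generated by monomials; write $\aaa = (X^{\mathbf{a}_1}, \ldots, X^{\mathbf{a}_s})$ with $\mathbf{a}_i \in \sigma^{\vee} \cap M$. By Proposition \ref{pro_of_PQ} (iii), we then have
\begin{equation*}
\PP(\aaa) = \conv\{\mathbf{a}_1, \ldots, \mathbf{a}_s\} + \sigma^{\vee}.
\end{equation*}
This exhibits $\PP(\aaa)$ as the Minkowski sum of a bounded rational polytope $Q := \conv\{\mathbf{a}_1, \ldots, \mathbf{a}_s\}$ (whose vertices lie in $M \subseteq M_{\QQQ}$) and the rational polyhedral cone $\sigma^{\vee}$ (rational since $\sigma$ is a rational cone in $N_{\RRR}$).

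Next I would apply the Motzkin/Minkowski--Weyl decomposition theorem: a subset of $M_{\RRR}$ is a polyhedron — i.e.\ an intersection of finitely many closed affine half-spaces — if and only if it is the Minkowski sum of a polytope and a finitely generated cone. Consequently $\PP(\aaa) = Q + \sigma^{\vee}$ is a polyhedron, and moreover it has a rational presentation because both summands are defined over $\QQQ$. (If one wants to avoid citing Minkowski--Weyl as a black box, one can argue directly: the set of $(v,\lambda) \in N_{\QQQ} \times \QQQ$ that are valid for $\PP(\aaa)$ — that is, satisfying $\langle \mathbf{a}_i, v\rangle \ge \lambda$ for each $i$ and $\langle w, v\rangle \ge 0$ for each $w$ in a finite rational generating set of $\sigma^{\vee}$ — is itself a rational polyhedral cone in $N_{\QQQ}\times\QQQ$, and choosing finitely many generators of its facets yields the desired $(v'_l, \lambda'_l)$.)

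Finally, from the rational H-representation produced above we extract the pairs $v'_l \in N_{\QQQ}$ and $\lambda'_l \in \QQQ$, $l = 1,\ldots,t$, with
\begin{equation*}
\PP(\aaa) = \bigcap_{l=1}^t \HH(v'_l; \lambda'_l),
\end{equation*}
completing the proof. There is no real obstacle here; the only point needing care is that rationality is preserved, which is automatic because both $Q$ and $\sigma^{\vee}$ are rational, so the facet-defining inequalities of $\PP(\aaa)$ can be taken with coefficients in $\QQQ$.
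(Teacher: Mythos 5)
Your proposal is correct, and it reaches the same endpoint as the paper from the same starting point, namely $\PP(\aaa) = \conv\{\mathbf{a}_1,\ldots,\mathbf{a}_s\} + \sigma^{\vee}$ via Proposition~\ref{pro_of_PQ}~(iii). The only difference is where the appeal to Minkowski--Weyl happens. You cite the general polyhedral form of the theorem (Minkowski sum of a rational polytope and a rational finitely generated cone $\Leftrightarrow$ finite intersection of rational half-spaces) as a black box. The paper instead homogenizes: it forms the rational polyhedral cone
$\tau := \sum_i \RRR_{\ge 0}(\mathbf{a}_i,1) + \sum_j \RRR_{\ge 0}(u_j,0)$
in $M_{\RRR}\times\RRR$, checks that $\tau \cap (M_{\RRR}\times\{1\}) = \PP(\aaa)\times\{1\}$, cites only the \emph{cone} version of Minkowski--Weyl to write $\tau$ as a finite intersection of rational linear half-spaces $\HH((v'_l,\mu_l);0)$, and then slices at height $1$ to produce the affine half-spaces $\HH(v'_l;-\mu_l)$. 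This homogenization is exactly the standard reduction of the general polyhedral Minkowski--Weyl theorem to its conical case, so in substance the two proofs coincide; the paper just unfolds one layer of the citation that you treat as a black box. Your parenthetical alternative (the set of valid rational inequalities forms a rational polyhedral cone in $N_{\QQQ}\times\QQQ$, then pick facet generators) is a legitimate dual route and would also be acceptable, though it is not what the paper does.
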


\begin{proof}

Since $\sigma$ is a rational polyhedral cone,
so is $\sigma^{\vee}$.
Hence there exists $u_i \in M$ such that
$$\sigma^{\vee} = \RRR_{\ge 0}u_1 + 
\cdots + \RRR_{\ge 0}u_m.$$
We assume that $\aaa=(X^{\mathbf{a}_1},\cdots, X^{\mathbf{a}_s})$.
We consider the rational polyhedral cone $\tau$ of $M_{\RRR}
 \times \RRR$ as
$$\tau:=\RRR_{\ge 0}(\mathbf{a}_1,1)+
\cdots + \RRR_{\ge 0}(\mathbf{a}_s,1)+
\RRR_{\ge 0}(u_1,0)+ 
\cdots + \RRR_{\ge 0}(u_m,0).$$
For such $\tau$ and $\PP(\aaa)$,
\begin{equation}\label{polar1}
\tau \cap (M_{\RRR} \times \{1\}) = 
\PP(\aaa) \times \{1\}.
\end{equation} 
In fact, let $(u,1)$ be an element of the left-hand side.
Then
$$(u,1)=\sum_{i=1}^s a_i (\mathbf{a}_i,1) + \sum_{j=1}^m b_j(u_j,0),$$
where $a_i, b_j \ge 0$.
By the definition, $\sum a_i = 1$.
By Proposition \ref{pro_of_PQ} (iii), $u \in \PP(\aaa)$.
The similar argument implies the opposite inclusion.
Since $\tau$ is the rational polyhedral convex cone,
for $l = 1,\cdots,t$,
there exists 
\newline
$(v'_l, \mu_l) \in N_{\QQQ} \times \QQQ$
such that
\begin{equation}\label{polar2}
\tau = \bigcap_{l=1}^t \HH((v'_l,\mu_l);0),
\end{equation}
where $\HH((v'_l,\mu_l);0)$ is the affine half space of 
$M_{\RRR}\times \RRR$.
The duality pair of $M_{\RRR} \times \RRR$ and 
$N_{\RRR} \times \RRR$ is defined as
$$\langle (u,\lambda), (v,\mu) \rangle := \langle u,v \rangle 
+ \lambda \mu,$$
for every $u \in M_{\RRR},\ v \in N_{\RRR}$
and $\lambda ,\ \mu \in \RRR$.
Under this duality,
$$\HH((v,\mu);0) \cap (M_{\RRR} \times \{1\})
= \HH(v;-\mu) \times \{1\}.$$
Therefore if we set $\lambda'_l:=-u_l$ for each $l=1,\cdots, t$,
the assertion of the theorem follows by (\ref{polar1}) and (\ref{polar2}). 
\end{proof}

\begin{Thm}\label{rational}
Let $R,\ \sigma$ and $\aaa$ be as in Lemma \ref{polygon}.
Furthermore, we assume that $\sigma$ is a $d$-dimensional simplicial
 cone.
Let $J$ be an $\mmm$-primary monomial ideal,
where $\mmm$ is the maximal monomial ideal of $R$.
Then the F-threshold $\cc^J(\aaa)$ of $\aaa$ with respect to $J$
is a rational number.
\end{Thm}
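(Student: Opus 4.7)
My plan is to use Theorem~\ref{Thmfthre} to rewrite $\cc^J(\aaa)=\sup_{u\in \sigma^{\vee}\setminus\QQ(J)}\lambda_{\aaa}(u)$ as the maximum of a continuous, piecewise rational-linear function over a compact rational polyhedral complex, whence rationality is automatic.

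By Lemma~\ref{polygon}, write $\PP(\aaa)=\bigcap_{l=1}^{t}\HH(v'_l;\lambda'_l)$ with $v'_l\in N_{\QQQ}$ and $\lambda'_l\in\QQQ$. Proposition~\ref{pro_of_PQ}(ii),(iii) shows that the recession cone of $\PP(\aaa)$ equals $\sigma^{\vee}$; hence for $u\in\sigma^{\vee}$ every inequality with $\lambda'_l\le 0$ holds automatically, and unwinding the condition $u/\lambda\in\PP(\aaa)$ gives
$$\lambda_{\aaa}(u)=\min_{l\,:\,\lambda'_l>0}\frac{\langle u,v'_l\rangle}{\lambda'_l}\qquad (u\in\sigma^{\vee}).$$
Thus $\lambda_{\aaa}$ is a continuous, piecewise linear function on $\sigma^{\vee}$ whose defining data are rational.

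Next I argue that $\sigma^{\vee}\setminus\QQ(J)$ is bounded. Since $\sigma$ is $d$-dimensional and simplicial, so is $\sigma^{\vee}$; let $u_1,\dots,u_d\in M$ be primitive generators of its extreme rays. Because $J$ is $\mmm$-primary we have $X^{u_i}\in\mmm=\sqrt{J}$, so there exist $k_i\in\ZZZ_{>0}$ with $X^{k_iu_i}\in J$, i.e.\ $k_iu_i+\sigma^{\vee}\subseteq\QQ(J)$. Writing $u=\sum a_iu_i$ with $a_i\ge 0$ for $u\in\sigma^{\vee}$, the union $\bigcup_i(k_iu_i+\sigma^{\vee})$ covers every point with some $a_i\ge k_i$, so $\sigma^{\vee}\setminus\QQ(J)$ is contained in the bounded box $\{\sum a_iu_i:0\le a_i<k_i\}$. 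Moreover, if $J=(X^{\mathbf{a}_1},\dots,X^{\mathbf{a}_s})$ and $v_1,\dots,v_n$ are the extreme rays of $\sigma$, then a point $u\in\sigma^{\vee}$ avoids $\mathbf{a}_j+\sigma^{\vee}$ iff $\langle u,v_{j'}\rangle<\langle\mathbf{a}_j,v_{j'}\rangle$ for some $j'$; distributing these finitely many disjunctions across the intersection over $j$ exhibits $\sigma^{\vee}\setminus\QQ(J)$ as a finite union of (locally closed) rational polyhedra, so its closure $K$ is a compact union of closed rational polytopes.

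By continuity of $\lambda_{\aaa}$,
$$\cc^J(\aaa)=\sup_{u\in\sigma^{\vee}\setminus\QQ(J)}\lambda_{\aaa}(u)=\max_{u\in K}\lambda_{\aaa}(u).$$
Refine $K$ so that on each piece $\lambda_{\aaa}$ agrees with a single rational linear form $\langle\cdot,v'_l\rangle/\lambda'_l$; the maximum on each closed rational polytope is then attained at a rational vertex, so $\cc^J(\aaa)$ is rational. The main technical obstacle is the boundedness of $\sigma^{\vee}\setminus\QQ(J)$, where both the simplicial $d$-dimensional hypothesis on $\sigma$ and the $\mmm$-primarity of $J$ are essential: without them the complement could have unbounded directions and the supremum might fail to be attained (or even be infinite), breaking the scheme.
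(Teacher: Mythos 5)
Your argument is correct and takes a genuinely different route from the paper's. The paper also begins with Theorem~\ref{Thmfthre} and Lemma~\ref{polygon}, but then reduces the supremum to one over $\partial\QQ(J)$, introduces the partial order $\le_\sigma$ on $\partial\QQ(J)$, uses Lemma~\ref{dualandpoly} to replace the supremum by a maximum over the finite set $B$ of $\le_\sigma$-maximal boundary points, and finally proves a combinatorial claim that each $u\in B$ lies in the intersection $\bigcap_j\bigl(\mathbf{b}_{i_j}+(\partial\HH(v_j;0)\cap\sigma^\vee)\bigr)$ of $d$ rational hyperplanes, which forces $u\in M_\QQQ$ (this step uses the simplicial hypothesis to solve for $u$ uniquely from its pairings with the $v_j$). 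You instead exhibit $\lambda_\aaa$ explicitly as a minimum of rational linear forms, show $\sigma^\vee\setminus\QQ(J)$ is bounded, pass to its closure $K$ as a finite union of compact rational polytopes, and invoke the standard fact that a rational piecewise-linear function on a rational polyhedral complex attains its maximum at a rational vertex. This bypasses the paper's explicit construction of the set $B$ and its ad hoc maximality claim, and is conceptually cleaner; the paper's proof is more constructive and identifies more precisely which points realize the supremum. One small quibble: you assert that the simplicial hypothesis is essential for the boundedness of $\sigma^\vee\setminus\QQ(J)$. In fact boundedness already follows from $J$ being $\mmm$-primary and $\sigma$ being $d$-dimensional (so that $\sigma^\vee$ is pointed and $\mmm^N\subseteq J$ for some $N$ forces any sufficiently deep point of $\sigma^\vee$ into $\QQ(J)$); simpliciality merely makes your barycentric-coordinate argument transparent. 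The hypothesis is, however, used implicitly elsewhere in both proofs through the structure theory invoked, so nothing is lost.
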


\begin{proof}
We denote by $\partial \QQ(J)$ 
the boundary of $\QQ(J)$ in $\sigma^{\vee}$.
By Lemma \ref{polygon},
if there exists a finite set $B \subseteq M_{\QQQ} \cap \partial \QQ(J)$
such that 
$$\cc^J(\aaa)= \max_{\omega \in B} \lambda_{\aaa}(\omega),$$ 
then we have $\cc^J(\aaa) \in \QQQ$.
\newline
First, we prove that  
$$\cc^J(\aaa)=\sup_{\omega \in \partial\QQ(J)} \lambda_{\aaa}(\omega).$$
By Theorem \ref{Thmfthre},
if there exists an element $\omega \in \sigma^{\vee}$
such that $\cc^J(\aaa)= \lambda_{\aaa}(\omega)$,
then $\omega \in \partial \QQ(J)$.
In fact, if such $\omega$ is in $\sigma^{\vee} \setminus \QQ(J)$,
there exists $\varepsilon > 0$ such that
 $(1+\varepsilon)\omega \in \sigma^{\vee} \setminus \QQ(J)$.
This implies that $\cc^J(\aaa) \ge
 (1+\varepsilon)\lambda_{\aaa}(\omega)$.
It is a contradiction, thus we are done.
\newline
Second, we prove the existence of $B \subseteq M_{\QQQ} \cap
\partial \QQ(J)$.
We assume that $\sigma=\RRR_{\ge 0}v_1+\cdots+\RRR_{\ge
 0}v_{d}$, where $v_j$ are primitive lattice points.
Since $\sigma$ is simplicial, for every $j$,
there exists $u_j \in M_{\QQQ}$ such that 
$$\langle u_j, v_l \rangle = \delta_{j l},\ l \in \{1,\cdots,d\}.$$
Since $J$ is $\mmm$-primary,
there exists a nonnegative integer $r_j$
such that \mbox{$r_j u_j \in \QQ(J)$.}
That implies $\partial \QQ(J)$ is bounded.
We define the order $\le_{\sigma}$ over $\partial \QQ(J)$ 
as $u \le_{\sigma} u' $ if
$$\langle u, v_j \rangle \le 
\langle u', v_j \rangle,\ \forall j= 1,\cdots, d.$$
Then $\partial \QQ(J)$ has maximal elements with respect to
this order.
Let \mbox{$B \subseteq \partial \QQ(J)$} be the set of 
maximal elements with respect to the order $\le_{\sigma}$.
By Lemma \ref{dualandpoly},
we conclude
$$\cc^J(\aaa)=
\sup_{\omega \in \partial\QQ(J)} \lambda_{\aaa}(\omega)=
\sup_{\omega \in B} \lambda_{\aaa}(\omega).$$
To show that $B$ is a finite set of $M_{\QQQ}$,
we prove the following claim.
\begin{Cl*}
Let $J=(X^{\mathbf{b}_1},\cdots,X^{\mathbf{b}_t})$.
We assume that $u\in B$, that is,
\begin{description}
\item[(i).] $u \in \partial \QQ(J)$,
\item[(ii).] $u$ is a maximal element with respect to the order $\le_{\sigma}$
in $\partial \QQ(J)$.
\end{description}
Then for every $j=1,\cdots,d$,
there exists $i_j$ 
such that 
\begin{equation}\label{pro0}
u \in \bigcap_{j=1}^n \left(\mathbf{b}_{i_j} +(\partial \HH
(v_j;0)\cap \sigma^{\vee})\right).
\end{equation}
In particular, $B$ is a finite set and $u \in M_{\QQQ}.$
\end{Cl*}
\begin{proof}[Proof of Claim]
We suppose that $u$ does not satisfy (\ref{pro0}).
Then there exists $j' \in \{1,\cdots, d\}$ such that 
\begin{equation}\label{pro3}
u \notin \mathbf{b}_i + 
(\partial\HH(v_{j'};0) \cap \sigma^{\vee}),
\end{equation}
for all $i=1,\cdots,t$.
We choose $u' \in \sigma^{\vee}$ as
$$\langle u', v_j \rangle = \langle u, v_j \rangle,\ (j \neq j'),$$
$$\langle u', v_{j'} \rangle = \lfloor \langle u, v_{j'} \rangle \rfloor
 +1.$$
Since $\sigma$ is simplicial, $u'$ uniquely exists.
We will show that the existence of $u'$ contradicts
the assumption (ii).
By the construction of $u'$,
we have $u' \in \QQ(J)$.
To see $u' \notin \Int \QQ(J)$,
we rephrase the assumption (i).
Since $u \notin \Int\QQ(J)$,
we have $u \notin \mathbf{b}_i + \Int(\sigma^{\vee})$
 for all $i=1,\cdots,t$.
Furthermore, this is equivalent to
the existence of $l_i$ such that
\begin{equation}\label{pro2}
\langle u, v_{l_i} \rangle \le \langle \mathbf{b}_i, v_{l_i}
\rangle,
\end{equation}
 for each $i=1, \cdots, t$.
If $l_i \neq j'$,
we have directly
$$\langle u', v_{l_i} \rangle 
= \langle u, v_{l_i} \rangle \le 
\langle \mathbf{b}_{i}, v_{l_i} \rangle,$$
by the construction of $u'$ and the relation (\ref{pro2}).
On the other hand,
if $l_i=j'$,
then the relation (\ref{pro2}) and (\ref{pro3}) implies 
$$\lfloor \langle u, v_{j'} \rangle \rfloor
\le \langle \mathbf{b}_i, v_{j'} \rangle -1,$$
because $\mathbf{b}_i \in M$.
Hence
$\langle u', v_{l_i} \rangle \le 
\langle \mathbf{b}_{i}, v_{l_i} \rangle.$
Eventually, in both cases, \mbox{$u' \notin \Int \QQ(J)$.}
Therefore $u' \in \partial \QQ(J)$.
By the construction of $u'$,
the element $u$ is not a maximal element in $\partial \QQ(J)$.
It contradicts the assumption (ii).
We complete the proof of Claim.

\end{proof}
We complete the proof of the theorem. 
\end{proof}

Now we consider the rationality of F-jumping coefficients 
on $\QQQ$-Gorenstein toric rings.
The rationality of F-jumping coefficients is 
the consequence of the fact that test ideals are 
equal to multiplier ideals
(\cite[Theorem 4.8]{HY} and \cite[Theorem 1]{B}).
However, we also give its proof by the combinatorial method.

\begin{Prop}\label{rationaljump}
Let $R,\ \sigma,\ \aaa$ be as in Lemma \ref{polygon}.
Moreover, we assume $R$ is an $r$-Gorenstein toric ring.
Then for all $i$, the $i$-th F-jumping coefficient $\cc^i(\aaa)$
of $\aaa$ is a rational number.
\end{Prop}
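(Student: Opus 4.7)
The plan is to induct on $i$. The base case $\cc^0(\aaa)=0$ is trivially rational. For the inductive step, I combine Proposition \ref{fjumptoric} with Theorem \ref{Bthm3}: the test ideal $\tau(\aaa^{\cc^{i-1}(\aaa)})$ is a monomial ideal with finitely many monomial generators $X^{\mathbf{b}_1},\ldots,X^{\mathbf{b}_t}$ where $\mathbf{b}_j\in M$. By Lemma \ref{dualandpoly}, $\mu_{\aaa}$ is monotone with respect to the partial order on $\sigma^{\vee}$ induced by the linear forms $\langle\cdot,v_j\rangle$, so the infimum defining $\cc^i(\aaa)$ is attained at a generator:
$$\cc^i(\aaa)=\min_{1\le j\le t}\mu_{\aaa}(\mathbf{b}_j).$$
The rationality of $\cc^i(\aaa)$ therefore reduces to showing $\mu_{\aaa}(u)\in\QQQ$ for every $u\in M$.

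Next I would exploit the $r$-Gorenstein hypothesis, which supplies a canonical element $\omega_0\in M_{\QQQ}$ with $r\omega_0\in M$ satisfying $\langle\omega_0,v_j\rangle=1$ for every primitive generator $v_j$ of $\sigma$. The key identity to establish is
$$\mu_{\aaa}(u)=\lambda_{\aaa}(u+\omega_0).$$
For the ``$\le$'' direction, every $\omega\in\sigma^{\vee}\setminus\OO$ satisfies $\langle\omega,v_j\rangle<1=\langle\omega_0,v_j\rangle$ for each $j$, so Lemma \ref{dualandpoly} gives $\lambda_{\aaa}(u+\omega)\le\lambda_{\aaa}(u+\omega_0)$. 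For the ``$\ge$'' direction, each $(1-\varepsilon)\omega_0$ with $\varepsilon\in(0,1)$ lies in $\sigma^{\vee}\setminus\OO$, so $\mu_{\aaa}(u)\ge\lambda_{\aaa}(u+(1-\varepsilon)\omega_0)$, and letting $\varepsilon\to 0^+$ together with the continuity of $\lambda_{\aaa}$ yields the claim. Continuity itself follows from Lemma \ref{polygon}: writing $\PP(\aaa)=\bigcap_l\HH(v'_l;\lambda'_l)$ with rational $v'_l$ and $\lambda'_l$ identifies $\lambda_{\aaa}$ on $\sigma^{\vee}$ with the minimum of the finitely many rational linear functionals $\langle\cdot,v'_l\rangle/\lambda'_l$ taken over those $l$ with $\lambda'_l>0$.

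With the identity in place, rationality is immediate: since $u\in M$ and $\omega_0\in(1/r)M$, we have $u+\omega_0\in M_{\QQQ}$, and from the same linear-programming description of $\lambda_{\aaa}$ it follows that $\lambda_{\aaa}(u+\omega_0)$ is a minimum of rational numbers, hence rational. I expect the main obstacle to be the identity $\mu_{\aaa}(u)=\lambda_{\aaa}(u+\omega_0)$, since it is precisely the $r$-Gorenstein condition that produces a single rational element $\omega_0$ simultaneously maximizing all the functionals $\langle\cdot,v_j\rangle$ on the closure $\overline{\sigma^{\vee}\setminus\OO}$; in a non-Gorenstein toric ring the supremum defining $\mu_{\aaa}(u)$ could be approached along several distinct faces, and there would be no single rational point at which to evaluate $\lambda_{\aaa}$.
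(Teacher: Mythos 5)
Your proof is correct and follows essentially the same route as the paper: reduce via Proposition \ref{fjumptoric} and Lemma \ref{dualandpoly} to computing $\mu_{\aaa}$ at a monomial generator $\mathbf{b}\in M$ of $\tau(\aaa^{\cc^{i-1}(\aaa)})$, use the $r$-Gorenstein element $\omega_0\in\frac{1}{r}M$ with $\langle\omega_0,v_j\rangle=1$ to identify $\mu_{\aaa}(\mathbf{b})=\lambda_{\aaa}(\mathbf{b}+\omega_0)$, and invoke the rational half-space description of $\PP(\aaa)$ from Lemma \ref{polygon} to conclude rationality. Where the paper merely cites ``a similar argument to that of the proof of Proposition \ref{comparigoren},'' you fill in the two-sided argument for the identity $\mu_{\aaa}(u)=\lambda_{\aaa}(u+\omega_0)$ (monotonicity via Lemma \ref{dualandpoly} for one inequality, the $(1-\varepsilon)\omega_0$ approximation together with continuity of $\lambda_{\aaa}$ for the other) and make explicit the linear-programming description $\lambda_{\aaa}(u)=\min_{\lambda'_l>0}\langle u,v'_l\rangle/\lambda'_l$ that delivers both the continuity and the rationality at once, so your write-up is a welcome expansion of the same argument.
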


\begin{proof}
In the proof of Proposition \ref{fjumptoric},
we have seen that there exists $\mathbf{b} \in M$
such that $\cc^i(\aaa)=\mu_{\aaa}(\mathbf{b})$,
where $X^{\mathbf{b}}$ is one of generators of 
$\tau(\aaa^{\cc^{i-1}(\aaa)})$.
By the similar argument to that of the proof of Proposition \ref{comparigoren},
there exists $\omega \in \sigma^{\vee}$
such that $\cc^i(\aaa)=\lambda_{\aaa}(\mathbf{b}+\omega / r)$.
Since $\omega$ corresponds to the generator of 
$\omega_R^{(r)}$, where $\omega_R$ is the canonical module of $R$,
we see $\omega \in M$.
Hence $\mathbf{b}+\omega /r \in M_{\QQQ}$.
Therefore $\cc^i(\aaa)$ is a rational number.
\end{proof}

\subsection*{Acknowledgement}
The author would like to express his thanks to Professor Kei-ichi 
Watanabe who informs him the formula of F-thresholds 
on regular toric rings.
The author also thanks to Professor Daisuke Matsushita
for his constant advice and encouragement.



\begin{thebibliography}{aaaaa}
\bibitem[B]{B}
M. Blickle, {\it{Multiplier ideals and modules on toric varieties}},
	   Math.\ Z. {\bf{248}} (2004), 113 -- 121.

\bibitem[BMS1]{BMS1}
M. Blickle, M. Musta\c{t}\v{a} and K. E. Smith, {\it{F-thresholds of
	   hypersurfaces}}, arXiv: 0705.1210, preprint, to appear in
	   Trans.\ Amer.\ Math.\ Soc.\

\bibitem[BMS2]{BMS2}
M. Blickle, M. Musta\c{t}\v{a} and K. E. Smith, {\it{Discreteness and
	   rationality of F-thresholds}}, arXiv: math/0607660, preprint,
	   to appear in Michigan Math.\ J.\

\bibitem[HH]{HH}
M. Hochster and C. Huneke, {\it{Tight closure, invariant theory and the
	   Brian\c{c}on-Skoda thorem}}, J.\ Amer.\ Math.\ Soc.\ {\bf{3}}
	   (1990), 31 -- 116



\bibitem[HMTW]{HMTW}
C. Huneke, M. Musta\c{t}\v{a}, S. Takagi and K. Watanabe,
	   {\it{F-thresholds, tight closure, integral closure, and
	   multiplicity bounds}}, arXiv: 0708.2394, preprint, to appear
	   in Michigan Math.\ J.\

\bibitem[HY]{HY}
N. Hara and K. Yoshida, {\it{A generalization of tight closure
	   and multiplier ideals}}, Trans.\ Amer.\ Math.\ Soc.\
	   {\bf{355}} (2003), 3143 -- 3174.

\bibitem[KLZ]{KLZ}
M. Katzman, G. Lyubeznik and W. Zhang, {\it{On the discreteness and
	   rationality of $F$-jumping coefficients}}, arXiv:
	   0706.3028v2,
           preprint

\bibitem[Laz]{Laz}
R. Lazarsfeld, {\it{Positivity in algebraic geometry}} II, Ergebnisse
	   der Mathematik und ihrer Grenzgebiete, vol.\ {\bf{49}},
	   Springer-Verlag, Berlin, (2004) 1

\bibitem[MTW]{MTW}
M. Musta\c{t}\v{a}, S. Takagi and K. Watanabe,
	   {\it{F-thresholds and Bernstein-Sato polynomials}}, in
	   A. Laptev (ed.\ ), European congress of mathematics (ECM),
	   Stockholm, Sweden, June 27 -- July 2, 2004, Zurich, European
	   Mathematical Society, (2005), 341 -- 364.

\bibitem[TW]{TW}
S. Takagi and K. Watanabe, {\it{On F-pure thresholds}},
	   J. Algebra {\bf{282}} (2004), 278 -- 297.

\end{thebibliography}
\end{document}